\newcommand {\veps} {\varepsilon}
\newcommand {\vv}  { {\bf v} }
\newcommand {\ww}  { {\bf w} }
\newcommand{\hf}{\frac12}
\newcommand{\defeq}{\mathrel{\mathop:}=}
\newtheorem{theorem}{Theorem}
\newtheorem{corollary}[theorem]{Corollary}
\newtheorem{lemma}[theorem]{Lemma}
\newenvironment{proof}[1][Proof]{\begin{trivlist}
\item[\hskip \labelsep {\bfseries #1}]}{\end{trivlist}}
\newcommand{\qed}{\nobreak \ifvmode \relax \else
      \ifdim\lastskip<1.5em \hskip-\lastskip
      \hskip1.5em plus0em minus0.5em \fi \nobreak
      \vrule height0.75em width0.5em depth0.25em\fi}
\begin{document}

\title{Schur properties of convolutions of gamma random variables}
\author{Farbod Roosta-Khorasani\thanks{Dept. of Computer Science, University of British Columbia, Vancouver, Canada.
{\tt farbod@cs.ubc.ca}.} \and G\'{a}bor J. Sz\'{e}kely\thanks{National Science Foundation, Arlington, Virginia. {\tt gszekely@nsf.gov} and Alfr\'{e}d R\'{e}nyi Institute of Mathematics, Hungarian Academy of Sciences,  Budapest, Hungary.}}
\maketitle
\begin{abstract}
Sufficient conditions for comparing the convolutions of heterogeneous gamma random 
variables in terms of the usual stochastic order are established. 
Such comparisons are characterized by the Schur convexity properties of the cumulative distribution function of the convolutions.
Some examples of the practical applications of our results are given. 
\end{abstract}

\vspace{1pc}
\noindent
\textbf{Keywords}: Schur-convexity of  tails; majorization order; linear combinations; gamma 
distribution; tail probabilities.

\vspace{0.5pc} \noindent
\textbf{MSC 2010}: Primary 60E15; secondary 62E99

\section{Introduction and Main Result}

Linear combinations (i.e., convolutions) of independent gamma random variables (r.v's) often naturally arise in many applications in statistics, engineering, insurance, actuarial science and reliability. As such there has been extensive study of their stochastic properties in the literature. For examples of such  theoretical studies as well as applications see~\cite{kochar2010right, kochar2011tail, kochar2012some, furman2006tail, yu2011some, lihong2005stochastic, bon1999ordering, zhao2009mean, zhao2011some, merkle1994schur, amiri2011skewness, boland1994schur, szba, roszas} and references therein.

Bock et al.,~\cite{bock}, and Diaconis and Perlman,~\cite{diaconis1990bounds}, in their seminal works, first 
studied the Schur convexity properties of the cumulative distribution function of the linear combinations of independent gamma r.v's. Ever since, this topic and its variants have been studied by many researchers; see the references mentioned above. However despite all the efforts, the results in~\cite{bock} remained the best available, yet far from the best possible. Bakirov,~\cite{bakirov95} provided a tighter bound for the special case of convolutions of chi-squared r.v's of degree one, which often arise from quadratic forms. Here, we prove results regarding the Schur properties of the tails of convolutions of gamma r.v's with respect to the mixing weights and in terms of the usual stochastic order, and indeed sharpen some results given in~\cite{bock}. As a consequence, the result in~\cite{bakirov95} is also generalized. 

More specifically, let $X_{i},\; i=1,2,\ldots,n$, be $n$ independent and identically distributed (i.i.d) gamma distributed r.v's, parametrized by shape $\alpha > 0$ and rate $\beta > 0$ parameters with the probability density function (PDF)
\begin{equation*}
f(x) = \left\{
  \begin{array}{l l}
\frac{\beta^{\alpha}}{\Gamma(\alpha)} x^{\alpha - 1 } e^{-\beta x}  &\text{$x \ge 0$} \\
0 &\text{$x \leq 0$}
\end{array} \right..
\end{equation*}
Consider the following non-negative linear combinations of such r.v's
\begin{equation*}
\sum_{i=1}^{n}   \lambda_{i}  X_{i}, 
\end{equation*}
where $\lambda_{i} \geq 0, \; i=1,2,\ldots,n$, are real numbers. For a given $\alpha > 0$, $\beta > 0$ and $x > 0$, define 
\begin{equation}
P(\bm{\lambda};\alpha,\beta,x) \defeq \Pr \left( \sum_{i=1}^{n} \lambda_{i} X_{i} < x \right),
\label{P_x}
\end{equation}
where $\bm{\lambda} = (\lambda_{1},\lambda_{2},\ldots,\lambda_{n}) \in \mathbb{R}^{n}$. The aim of the present paper is to find the conditions allowing one to compare tail probabilities of the form~\eqref{P_x} with respect to the mixing weights $\bm{\lambda}$. In~\cite[Theorem 2.2]{roszas}, results regarding the extremal values of~\eqref{P_x} (i.e., maximal and minimal values with respect to $\bm{\lambda}$ and for given $\alpha,\beta$ and $x$) are proved. Here, we extend those results to be able to compare $\eqref{P_x}$ for any pair of weight vectors.

To that end, let us recall that the vector $\bm \lambda$ is said to \textit{majorize} the vector $\bm \mu$, denoted by $\bm \mu \prec \bm \lambda$, if 
\begin{subequations}
\begin{eqnarray}
&& 0 \leq \lambda_{n} \leq \ldots \leq \lambda_{2} \leq \lambda_{1}, \\
&& 0 < \mu_{n} \leq \ldots \leq \mu_{2} \leq \mu_{1}, \\
&& \sum_{i=1}^{k} \mu_{i} \leq \sum_{i=1}^{k} \lambda_{i}, \quad \forall k < n,\\
&& \sum_{i=1}^{n} \mu_{i} = \sum_{i=1}^{n} \lambda_{i}. 
\end{eqnarray}
\label{major}
\end{subequations}
Note that there is ``strict positivity'' assumption on $\bm{\mu}$, but not on $\bm{\lambda}$. Of course, it is clear that padding $\bm{\mu}$ (and as a result $\bm{\lambda}$) with 0's would not change the majorization order and simply add redundant components to both vectors.
Given $\bm \mu \prec \bm \lambda$, $P(.;\alpha,\beta,x)$ is said to be Schur convex if  $P(\bm{\mu};\alpha,\beta,x) \leq P(\bm{\lambda};\alpha,\beta,x)$, and it is said to be Schur concave if $P(\bm{\mu};\alpha,\beta,x) \geq P(\bm{\lambda};\alpha,\beta,x)$. For comprehensive details on the theory of majorization and its applications, refer to the classic book of Marshall and Olkin~\cite{marsholk79}. 

It is not hard to show that the variance of $\sum_{i=1}^{n}   \lambda_{i}  X_{i}$ is a Schur-convex function of $\bm{\lambda}$ and, indeed, it would be useful to know when $P(.;\alpha,\beta,x)$ exhibit similar properties. For symmetric distributions a fairly general result is known.  If $X_{1},X_{2}\ldots,X_{n}$ are independent random variables with a common
symmetric and log-concave PDF, in~\cite{proschan}, it was shown that  $\Pr \left( \sum_{i=1}^{n} \lambda_{i} X_{i} < x \right)$ is Schur-convex in $\bm{\lambda}$ for any $x > 0$. However, for positive random variables with non-symmetric distributions (such as gamma r.v's), to the best of our knowledge, no such general results, except for those in~\cite{bock} and~\cite{bakirov95}, exist.

For the case of $n \geq 3$, Bock et al.,~\cite[Theorem 3]{bock}, showed that if $\bm \mu \prec \bm \lambda$ and $\lambda_{i} > 0\; \forall i$, then 
\begin{eqnarray*}
P(\bm{\mu};\alpha,\beta,x) &\geq& P(\bm{\lambda};\alpha,\beta,x), \quad \forall x > \frac{(n \alpha + 1) \max_{i} \lambda_{i}}{\beta}, \\
P(\bm{\mu};\alpha,\beta,x) &\leq& P(\bm{\lambda};\alpha,\beta,x), \quad \forall x < \frac{(n \alpha + 1) \min_{i} \lambda_{i}}{\beta}.
\end{eqnarray*}
For the special case of chi-squared r.v's of degree one (i.e., $\alpha = \beta = 1/2$), Bakirov,~\cite{bakirov95}, provided the following tighter bound (for $\forall n \geq 1$) for the Schur concavity of $P(. ;1/2,1/2,x)$:
\begin{equation*}
P(\bm{\mu};\hf,\hf,x) \geq P(\bm{\lambda};\hf,\hf,x), \quad \forall x > 2 s,
\end{equation*}
where $s = \sum_{i=1}^{n} \lambda_{i} = \sum_{i=1}^{n} \mu_{i}$. No results concerning the Schur convexity of $P(. ;1/2,1/2,x)$ was given in~\cite{bakirov95}.

%
%
\vspace{0.5cm}

Our main result is stated as follows (the details of its proof are given in the Appendix). The discussions regarding the relative improvements compared to~\cite{bock} and~\cite{bakirov95}, as well as further extensions are deferred to Section~\ref{sec:discuss}.
\begin{theorem}
Let $X_{i} \sim Gamma(\alpha,\beta),\; i=1,2,\ldots,n$, be $n$ i.i.d gamma r.v's, where $\alpha >0$ and $\beta > 0$. If $\bm \mu \prec \bm \lambda$, then 
\begin{eqnarray*}
P(\bm{\mu};\alpha,\beta,x) &\geq& P(\bm{\lambda};\alpha,\beta,x), \quad \forall x > \frac {(2  \alpha + 1) s}{2 \beta }, \\
P(\bm{\mu};\alpha,\beta,x) &\leq& P(\bm{\lambda};\alpha,\beta,x), \quad \forall x <
  \begin{dcases}
		\frac{\alpha s}{\beta},~& n = 2 \cr
		\frac{(\alpha-1) s}{\beta} ,~& n \geq 3, \; \alpha > 1 
	\end{dcases},
\end{eqnarray*}
where $s = \sum_{i=1}^{n} \lambda_{i} = \sum_{i=1}^{n} \mu_{i}$.
\label{majorization_thm}
\end{theorem}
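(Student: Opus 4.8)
The plan is to verify the Schur--Ostrowski criterion. Since $P(\,\cdot\,;\alpha,\beta,x)$ is a symmetric function of $\bm{\lambda}$, it is Schur-concave (resp. Schur-convex) on $\{\lambda_i\ge 0\}$ exactly when, for the two largest coordinates $\lambda_1\ge\lambda_2$ with all remaining coordinates held fixed, $(\lambda_1-\lambda_2)\big(\partial_{\lambda_1}P-\partial_{\lambda_2}P\big)\le 0$ (resp.\ $\ge 0$). The first step is a derivative identity: writing $P=\Pr(\lambda_iX_i+S_{-i}<x)$ with $S_{-i}=\sum_{k\ne i}\lambda_kX_k$, differentiating in $\lambda_i$ and using $wf_{\mathrm{Gamma}(\alpha,\beta)}(w)=\tfrac\alpha\beta f_{\mathrm{Gamma}(\alpha+1,\beta)}(w)$, one gets $\partial_{\lambda_i}P=-\tfrac\alpha\beta\,g_i(x)$, where $g_i$ is the density at $x$ of the combination obtained from $S=\sum_k\lambda_kX_k$ by raising the shape of $X_i$ from $\alpha$ to $\alpha+1$. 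Since $\mathrm{Gamma}(\alpha+1,\beta)\stackrel{d}{=}\mathrm{Gamma}(\alpha,\beta)+\mathrm{Exp}(\beta)$ (independent summands), $g_i(x)=A(\beta/\lambda_i)$ with $A(b)\defeq b\int_0^x g_S(x-v)e^{-bv}\,dv$ the density at $x$ of $S+E_b$, $E_b\sim\mathrm{Exp}(b)$ independent of $S$ and $g_S$ the density of $S$. Thus everything reduces to the sign of $A'(b)=\int_0^x g_S(x-v)(1-bv)e^{-bv}\,dv$ for $b$ between $\beta/\lambda_1$ and $\beta/\lambda_2$: $A'\le 0$ there yields Schur-concavity, $A'\ge 0$ yields Schur-convexity.

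\emph{Schur-convex half (small $x$).} The kernel $(1-bv)e^{-bv}$ is positive then negative on $(0,x)$, with nonnegative total $\int_0^x(1-bv)e^{-bv}\,dv=xe^{-bx}$; a one-line rearrangement bound then gives $A'(b)\ge g_S(x-\tfrac1b)\,xe^{-bx}\ge 0$ as soon as $v\mapsto g_S(x-v)$ is nonincreasing on $[0,x]$, i.e.\ as soon as $x$ does not exceed the mode $m_S$ of $g_S$. So it suffices to prove $m_S\ge\frac{(\alpha-1)s}{\beta}$ when $\alpha>1$. I would deduce this from the score identity $g_S'(t)/g_S(t)=\Ex\big[\rho_k(\lambda_kX_k)\mid S=t\big]$, valid for every $k$ ($\rho_k$ the score of $\mathrm{Gamma}(\alpha,\beta/\lambda_k)$): at $t=m_S$ this reads $\Ex[1/(\lambda_kX_k)\mid S=m_S]=\frac{\beta}{(\alpha-1)\lambda_k}$, hence $\Ex[\lambda_kX_k\mid S=m_S]\ge\frac{(\alpha-1)\lambda_k}{\beta}$ by Jensen applied to $z\mapsto 1/z$; summing over $k$ and using $\sum_k\Ex[\lambda_kX_k\mid S=m_S]=m_S$ gives the bound (this is precisely where $\alpha>1$ and the constant $\alpha-1$ come from, and why the $n\ge 3$ bound is weaker). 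For $n=2$ the sharper constant $\frac{\alpha s}{\beta}$ needs a finer computation: writing $g_S=g_{\lambda_1X_1}\!*g_{\lambda_2X_2}$ as a Beta-type integral reduces $A(\beta/\lambda_1)\le A(\beta/\lambda_2)$ to $m(a)\le\frac{\lambda_1}{\lambda_1+\lambda_2}$, where $a=\frac{\beta x(\lambda_1-\lambda_2)}{\lambda_1\lambda_2}$ and $m(a)=\Ex_{\nu_a}[\xi]$ is the mean of the exponentially tilted law $\nu_a\propto\xi^{\alpha-1}(1-\xi)^{\alpha-1}e^{a\xi}$ on $[0,1]$; the constraint is extremal as $\lambda_1/\lambda_2\to\infty$, and there $m(a)=1-\frac{\alpha}{a}+o(1/a)$, which produces the threshold $\frac{\alpha(\lambda_1+\lambda_2)}{\beta}=\frac{\alpha s}{\beta}$.

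\emph{Schur-concave half (large $x$).} Now the kernel $(bv-1)e^{-bv}$ is negative then positive with negative total, and $v\mapsto g_S(x-v)$ is unimodal but not monotone on $[0,x]$, so the elementary bound breaks. For $n=2$ I would again pass to $m(a)$: the requirement becomes $m(a)\ge\frac{\lambda_1}{\lambda_1+\lambda_2}$ for all $\lambda_1\ge\lambda_2$; the two sides agree at $\lambda_1=\lambda_2$ (there $a=0$, $m(0)=\hf$), so the binding case is $\lambda_1/\lambda_2\to 1$, and matching first derivatives there gives the threshold $\frac{(2\alpha+1)(\lambda_1+\lambda_2)}{2\beta}=\frac{(2\alpha+1)s}{2\beta}$. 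Both $n=2$ thresholds come down to the single-variable fact that $\rho(a)\defeq\frac{a\,m(a)(1-m(a))}{2m(a)-1}$ decreases on $(0,\infty)$ from $\alpha+\hf$ to $\alpha$; as $m(a)$ is a ratio of confluent hypergeometric functions with lower parameter twice the upper one — hence expressible through a single Bessel-type function — this is a special-function monotonicity estimate. For $n\ge 3$, condition on $R=\sum_{k\ge 3}\lambda_kX_k$: then $\partial_{\lambda_1}P-\partial_{\lambda_2}P=-\tfrac\alpha\beta\,\Ex_R[D(x-R)]$, where $D=\phi_1-\phi_2$, with $\phi_i$ the density of $\lambda_1X_1+\lambda_2X_2$ after raising the shape of $X_i$ to $\alpha+1$, is exactly the two-variable difference analysed above. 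Thus $D$ changes sign once, is $\ge 0$ on $[\tfrac{(2\alpha+1)\sigma}{2\beta},\infty)$ with $\sigma=\lambda_1+\lambda_2$, and satisfies $\int_0^\infty D=0$ together with $\int_0^T D\le 0$ for all $T$ (the heavier bump on the larger weight is stochastically larger, by an $\mathrm{Exp}(\beta)$ coupling). One then shows $\Ex_R[D(x-R)]\ge 0$ once $x>\frac{(2\alpha+1)s}{2\beta}=\frac{(2\alpha+1)(\sigma+s_R)}{2\beta}$, $s_R=s-\sigma$: the negative part of $r\mapsto D(x-r)$ sits on $\{r>x-\tfrac{(2\alpha+1)\sigma}{2\beta}\}\subseteq\{r>\tfrac{(2\alpha+1)s_R}{2\beta}\}$, i.e.\ in the far upper tail of $R$ (whose mean is only $\frac{\alpha s_R}{\beta}$), and the extra room $\frac{(2\alpha+1)s_R}{2\beta}$ is exactly what makes the small, far-tail negative contribution lose to the positive one — integrating by parts against $t\mapsto\int_0^t D$ and using the unimodality of $f_R$ should close it. I expect this last estimate — controlling $\Ex_R[D(x-R)]$ when $v\mapsto g_S(x-v)$ is non-monotone — to be the main obstacle, with the monotonicity of $\rho$ the secondary one.
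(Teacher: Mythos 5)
Your opening reduction is essentially the paper's: the identity $\partial_{\lambda_i}P=-\tfrac{\alpha}{\beta}\,f_{S+E_{\beta/\lambda_i}}(x)$ (shape bumped from $\alpha$ to $\alpha+1$, i.e.\ an independent exponential added) is exactly what the paper obtains by Laplace inversion with $\psi_i\sim Gamma(1,\alpha)$, and your use of unimodality of gamma convolutions corresponds to Theorem~\ref{theorem_unimodal}. Your Schur-convexity argument for $n\ge3$, $\alpha>1$ is, as far as I can check, correct and genuinely different from the paper's: the rearrangement bound $A'(b)\ge g_S(x-\tfrac1b)\,xe^{-bx}$ together with the conditional-score identity and Jensen ($\Ex[\lambda_kX_k\mid S=m_S]\ge(\alpha-1)\lambda_k/\beta$, summed to give $m_S\ge(\alpha-1)s/\beta$) replaces the paper's route through Lemma~\ref{lemma_1*} and the perturbation argument that pushes the minimal mode down to that of $Gamma(\alpha,\alpha)$; this is an attractive, more elementary derivation of that half.

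The rest of the theorem, however, is not proved in your proposal. First, both sharp $n=2$ thresholds (and the input ``$D\ge0$ beyond $(2\alpha+1)\sigma/(2\beta)$'' that your $n\ge3$ argument consumes) are made to rest on the claim that $\rho(a)=a\,m(a)(1-m(a))/(2m(a)-1)$ decreases from $\alpha+\tfrac12$ to $\alpha$, which you leave as an unproven special-function estimate; checking only the ``binding'' endpoint ($\lambda_1/\lambda_2\to1$ or $\to\infty$) and matching first derivatives or asymptotics there does not give the inequality for intermediate ratios. In the paper this is precisely the content of Lemma~\ref{lemma_2*}, quoted from the companion reference. Second, and more seriously, the Schur-concavity for $n\ge3$ (the main statement, valid for every $\alpha>0$) is left at the level of a heuristic. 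Note that, by the very exponential-difference identity you use, $\Ex_R[D(x-R)]=f_{S+E_{\beta/\lambda_1}}(x)-f_{S+E_{\beta/\lambda_2}}(x)$ is proportional to $(\lambda_2-\lambda_1)\,\partial_x f_{S+E_{\beta/\lambda_1}+E_{\beta/\lambda_2}}(x)$, so your target $\Ex_R[D(x-R)]\ge0$ for all $x>(2\alpha+1)s/(2\beta)$ is exactly the statement that the mode of the full $(n+2)$-fold convolution is at most $(2\alpha+1)s/(2\beta)$, uniformly over all admissible weight vectors; this uniform mode bound is the heart of the paper's proof (the directional-perturbation argument showing the maximal mode occurs when all but two weights vanish, then Lemma~\ref{lemma_2*}). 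Your proposed substitute does not close: after integrating by parts against $G(t)=\int_0^t D$, the boundary term $f_R(0)G(x)\le0$ has the unfavourable sign and $f_R'(x-\cdot)$ changes sign on $(0,x)$ while $G\le0$, so no sign conclusion follows; and the qualitative observation that the negative mass of $D(x-\cdot)$ sits beyond $(2\alpha+1)s_R/(2\beta)$ while $\Ex R=\alpha s_R/\beta$ cannot by itself beat the (also small) positive mass of $D$ near its crossing without a quantitative comparison you have not supplied. So the concavity half, which you yourself flag as the main obstacle, is a genuine gap rather than a routine verification.
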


This paper is organized as follows. In Section~\ref{sec:examples}, we give some examples of the practical applications of our results. In Section~\ref{sec:discuss}, we discuss the relative improvements of our results compared to those in~\cite{bock} and~\cite{bakirov95}. We also extend Theorem~\ref{majorization_thm} by weakening the majorization requirement. In addition, we give similar results for the case where $n = \infty$. The proofs of our results are given in the Appendix.

\section{Examples}
\label{sec:examples}
In this section, we give examples to demonstrate some practical applications of our results.
\subsection{Experimental Design in Signal Detection}
\label{ex:signal}
Consider the additive model of observations
\begin{equation*}
D(t) = \tau s(t) + \eta(t), \quad t \in [0, T]
\end{equation*}
where $D(t)$ is the measured data, $s(t)$ is the signal of interest and $\eta(t)$ is the additive noise. Suppose we have $N$ measurements, taken at discrete time intervals, $0 = t_{1} \leq t_{1} \leq \ldots \leq t_{N} = T$. In addition, suppose that $\{\eta(t_{i}): i = 1,2,\ldots,N\}$ is a collection of i.i.d Laplace r.v's with mean zero and variance $\sigma_{i}^{2}$. In addition, let $\tau = 1$ if there is a signal, and $\tau = 0$ otherwise (i.e., the measured data is in fact entirely the noise). To detect if the signal is present, we can use
the signal to noise ratio (SNR)
\begin{equation*}
Q(N) \defeq \frac{1}{N} \sum_{i=1}^{N} \frac{1}{\sigma_{i}}| D(t_{i})|.
\end{equation*}
Note that $\sigma_{i}^{-1} D(t)$ is a Laplace r.v, with mean $\tau s(t)$ and variance $1$. 

A design question to answer is that of , at least how many measurements, a priori, is needed to make sure the probability of Type I error (i.e., when we conclude that the signal is present, when in fact it is missing) is below a desired tolerance $0 < \delta \ll 1$.

If $\tau = 0$, then $\sigma_{i}^{-1} | D(t)|  \sim Gamma (1,\sqrt{2})$, so by Theorem~\ref{majorization_thm} we get
\begin{equation*}
P(Q(1) \geq x) \geq P(Q(2) \geq x) \geq \ldots \geq P(Q(N) \geq x) \geq \ldots ,\quad \forall x > \frac{3}{2\sqrt{2}}.
\end{equation*}
For a given $N$, we can easily compute $P(Q(N) \geq x)$. Hence, in order to find the minimum $N$ required, we can increase $N$ until $P(Q(N) \geq x) \leq \delta$.

\subsection{Matrix Trace Estimation}
\label{sec:trace}

The need to estimate the trace of an implicit symmetric positive semi-definite (SPSD) matrix
is of fundamental importance (see~\cite{sdr}) and arises in many applications; 
see for instance \cite{hutchinson,bafago,avto,HaberChungHermann2010,doas3,yori,rodoas1,rodoas2,learhe,gohewa,avron}
and references therein.
The standard approach for estimating the trace of such a matrix $A$, denoted here by $tr(A)$, is based on a Monte-Carlo method, where one generates $N$ 
random vector realizations $\ww_{i}$ from a suitable probability distribution $D$ and computes 
\begin{equation*}
tr_{D}^{N}(A) \defeq \frac{1}{N} \sum_{i=1}^{N} \ww_{i}^{t} A \ww_{i} .
\end{equation*}
One such suitable probability distribution is the standard normal, $\mathcal{N}(0,\mathbb{I})$. This estimator is known as the \textit{Gaussian estimator}, denoted here by $tr_{G}^{N}(A)$.

Now, given a pair of small positive real numbers $(\veps,\delta)$, 
consider finding an appropriate sample size $N$ such that
\begin{subequations}
\begin{eqnarray}
\Pr\Big( tr_{G}^{N}(A) \geq (1-\veps) tr(A) \Big) \geq 1-\delta \label{prob_ineq_lower}, \\
\Pr\Big( tr_{G}^{N}(A) \leq (1+\veps) tr(A) \Big) \geq 1-\delta \label{prob_ineq_upper}.
\end{eqnarray}
\label{prob_ineq_lower_upper}
\end{subequations}
Such question was first studied in~\cite{avto} and further improved in~\cite{roas1}. In particular, in~\cite{roas1} it was proved that the inequalities~\eqref{prob_ineq_lower_upper}
hold if
\begin{equation}
N > \frac{\| A \|}{tr(A)} \frac{8}{\veps^{2}} \ln (\frac{1}{\delta}),
\label{gauss_bd}
\end{equation}
where $\| A \|$ denotes the $L_{2}$ norm of the matrix $A$. The ratio $tr(A)/ \| A \|$ is known as the \textit{effective rank} of the matrix (see~\cite{eldar2012compressed}) and it is a stable quantity compared with the usual rank. 
The appearance of effective rank in the bound~\eqref{gauss_bd} is an indication of a possible relation between the ``\textit{skewness}'' of the eigenvalues of $A$ and the efficiency of the Gaussian estimator. In other words, the more skewed the eigenvalue distribution is, the worse we expect the Gaussian estimator to perform (i.e., the larger the true sample size required would be). However effective rank is not a consistent measure for skewness and, as such,
in~\cite{roas1}, this relationship was demonstrated only numerically and no consistent definition for how the relative skewness could be measured was given. Now using the majorization order among eigenvalue vectors as a consistent measure of skewness, the new theoretical results in the present paper fully  describe the observations from the numerical examples in~\cite{roas1}. As in the proof of~\cite[Theorem 1]{roas1}, we see that 
\begin{equation}
Pr\left( tr_{H}^{N}(A) \leq (1-\veps)tr(A) \right)  = P\left(\bm{\lambda};\frac{N}{2},\frac{N}{2},(1-\veps) tr(A)\right),
\end{equation}
where $\bm{\lambda}$ the vector of eigenvalues of $A$ sorted in the decreasing order.
%
Consider two SPSD matrices, $A_{1}$ and $A_{2}$, such that $tr(A_{1}) = tr(A_{2})$ and whose respective eigenvalue vectors, $\bm{\lambda}_{1}$ and $\bm{\lambda}_{2}$, are sorted in the decreasing order. If $\bm{\lambda}_{2} \prec \bm{\lambda}_{1}$, then we say that the eigenvalue distribution of $A_{1}$ is more skewed that that of $A_{2}$. If $N > 2/\veps$, then from Theorem~\ref{majorization_thm} we obtain
\begin{equation*}
P \left( \bm{\lambda}_{2};\frac{N}{2},\frac{N}{2},(1-\veps) tr(A_{2}) \right) \leq P \left( \bm{\lambda}_{1};\frac{N}{2},\frac{N}{2},(1-\veps) tr(A_{1}) \right).
\end{equation*}
In other words, using the same sample size $N$, our estimate with $A_{1}$ is more likely to be located further away to the left of the true value than that with $A_{2}$. Hence in order to make the former estimate better, we need to increase the sample size which, in turn, in some algorithms translates into more computational costs; see~\cite{HaberChungHermann2010,doas3, learhe,rodoas1,rodoas2,roszas}. 
Similar comparisons can be made for $Pr\left( tr_{H}^{N}(A) \leq (1+\veps)tr(A) \right)$. 

\section{Discussions and Further Extensions}
\label{sec:discuss}

The comparison between the results in Theorem~\ref{majorization_thm} and those in~\cite{bock} and~\cite{bakirov95} can be summarized as follows:
\begin{itemize}
\item For the special case of chi-squared distribution, i.e., $\alpha = \beta = 1/2$, our Schur concavity result is the same as that in~\cite{bakirov95}.
\item For $n=2$, the results of Theorem~\ref{majorization_thm} coincide with~\cite[Theorem 1]{bock}. However, for $n \geq 3$, our results are more uniform than~\cite[Theorem 3]{bock}. Namely, the sufficient conditions given in Theorem~\ref{majorization_thm} are not dependent on the dimensions of the vectors. Additionally, our bounds for $x$ are independent of the particular values of the vector components, such as ``$\min_{i} \lambda_{i}$'' or ``$\max_{i} \lambda_{i}$''. In other words, for all vectors whose sums are equal, we give a fixed bound for $x$. 
\item For $n \geq 3$, our Schur convexity result is sharper than that of~\cite[Theorem 3]{bock} when $$\max_{i} \lambda_{i} > \frac{\sum_{i} \lambda_{i}}{n}\frac{\alpha+\hf}{\alpha+1/n}.$$ Similarly, for $n \geq 3$ and  $\alpha > 1$, our Schur concavity result improves that of ~\cite[Theorem 3]{bock} when $$\min_{i} \lambda_{i} < \frac{\sum_{i} \lambda_{i}}{n}\frac{\alpha-1}{\alpha+1/n}.$$
\item Most importantly, in~\cite[Theorem 3]{bock}, it is required that $\lambda_{i} > 0,\; i = 1,2,\ldots,n$. This is a rather strong condition as many interesting comparisons cannot be performed this way. Our results do not make any strict positivity assumption on the components of $\bm{\lambda}$ and it suffices if they are simply non-negative. As a simple example, consider $\alpha = 2$, $\beta = 1$ and $x < 1$. Then using Theorem~\ref{majorization_thm}, we get $P(\bm{\lambda}_{1};2,1,x) \geq P(\bm{\lambda}_{2};2,1,x) \geq P(\bm{\lambda}_{3};2,1,x) \geq P(\bm{\lambda}_{4};2,1,x)$ with $\bm{\lambda}_{1} = (1,0,0,0)$, $\bm{\lambda}_{2} = (2/3,1/3,0,0)$, $\bm{\lambda}_{3} = (3/6,2/6,1/6,0)$, and $\bm{\lambda}_{4} = (4/10,3/10,2/10,1/10)$. This comparison is not possible with~\cite[Theorem 3]{bock}. 
\item For the Schur convexity of $P(.;\alpha,\beta,x)$ in the case of $\alpha \leq 1$ and $n \geq 3$, the result in~\cite[Theorem 3]{bock} remains the best available, as we were not able to improve upon it here. In fact, from our method of proof, it seems likely that obtaining a general result for this case is impossible. Indeed, Bock et al.\ \cite[p. 394]{bock} give an example which corroborates this observation.
\end{itemize}

It is possible to weaken the majorization requirement and obtain even more general results. More specifically, relaxing the equality condition in~\eqref{major} gives the following weak majorization order. Recall that the vector 
$\bm \lambda$ is said to \textit{weakly majorize} the vector $\bm \mu$, denoted by $\bm \mu \prec_{w} \bm \lambda$, if 
\begin{subequations}
\begin{eqnarray*}
&& 0 \leq \lambda_{n} \leq \ldots \leq \lambda_{2} \leq \lambda_{1}, \\
&& 0 < \mu_{n} \leq \ldots \leq \mu_{2} \leq \mu_{1}, \\
&& \sum_{i=1}^{k} \mu_{i} \leq \sum_{i=1}^{k} \lambda_{i}, \quad \forall k \leq n.
\end{eqnarray*}
\label{major_w}
\end{subequations}

In the case of weak majorization order, we have the following almost immediate corollary.
\begin{corollary}
Let $X_{i} \sim Gamma(\alpha,\beta),\; i=1,2,\ldots,n$, be $n$ i.i.d gamma r.v's, where $\alpha >0$ and $\beta > 0$. If $\bm \mu \prec_{w} \bm \lambda$, then 
\begin{eqnarray*}
P(\bm{\mu};\alpha,\beta,x) &\geq& P(\bm{\lambda};\alpha,\beta,x), \quad \forall x > \frac {(2  \alpha + 1) s_{\bm{\lambda}}}{2 \beta }, \\
P(\bm{\mu};\alpha,\beta,x) &\leq& P(\bm{\lambda};\alpha,\beta,x), \quad \forall x <
  \begin{dcases}
		\frac{\alpha s_{\bm{\mu}}}{\beta},~& n = 2 \cr
		\frac{(\alpha-1) s_{\bm{\mu}}}{\beta} ,~& n \geq 3, \; \alpha > 1 
	\end{dcases},
\end{eqnarray*}
where $s_{\bm{\lambda}} = \sum_{i=1}^{n} \lambda_{i}$ and $s_{\bm{\mu}} = \sum_{i=1}^{n} \mu_{i}$.
\label{majorization_cor_weak}
\end{corollary}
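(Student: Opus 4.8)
The plan is to reduce weak majorization to ordinary majorization by a padding/augmentation argument, so that Theorem~\ref{majorization_thm} can be applied directly. First I would recall the standard fact from majorization theory (Marshall and Olkin~\cite{marsholk79}): if $\bm\mu \prec_w \bm\lambda$ with $s_{\bm\mu} = \sum_i \mu_i \le \sum_i \lambda_i = s_{\bm\lambda}$, then there exists a vector $\bm\mu^{*}$ with $\bm\mu \le \bm\mu^{*}$ componentwise (after sorting) and $\bm\mu^{*} \prec \bm\lambda$ in the ordinary majorization sense. Equivalently — and this is the form I would actually use — one can append a single extra coordinate to each vector: set $\widetilde{\bm\lambda} = (\lambda_1,\dots,\lambda_n,0)$ and $\widetilde{\bm\mu} = (\mu_1,\dots,\mu_n, s_{\bm\lambda} - s_{\bm\mu})$. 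Then $\widetilde{\bm\mu}$ and $\widetilde{\bm\lambda}$ have equal total sum $s_{\bm\lambda}$, the partial-sum inequalities for $k \le n$ are inherited from $\bm\mu \prec_w \bm\lambda$, and the $k = n$ partial sum gives $\sum_{i=1}^n \mu_i \le \sum_{i=1}^n \lambda_i$ automatically, so $\widetilde{\bm\mu} \prec \widetilde{\bm\lambda}$ provided the coordinates can be arranged in decreasing order consistently with~\eqref{major}.

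The one subtlety — and I expect this to be the main obstacle — is that appending $s_{\bm\lambda} - s_{\bm\mu}$ to $\bm\mu$ need not respect the sorted order required in~\eqref{major}, and more seriously, the strict-positivity requirement ``$0 < \mu_n \le \dots \le \mu_1$'' may fail if $s_{\bm\lambda} = s_{\bm\mu}$ (the new coordinate is $0$). When $s_{\bm\lambda} = s_{\bm\mu}$ there is nothing to prove: $\bm\mu \prec_w \bm\lambda$ is just $\bm\mu \prec \bm\lambda$ and Theorem~\ref{majorization_thm} applies verbatim, with $s_{\bm\mu} = s_{\bm\lambda} = s$. So I would dispose of that case first, then assume $s_{\bm\lambda} - s_{\bm\mu} > 0$. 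For the sorting issue, I would invoke the remark already made in the paper immediately after~\eqref{major}: padding with zeros does not change the majorization order, and — crucially — $P(\bm{\lambda};\alpha,\beta,x)$ is a symmetric function of the coordinates of $\bm\lambda$ (it is the CDF of $\sum_i \lambda_i X_i$ with the $X_i$ i.i.d.), so $P$ is invariant under permuting coordinates and under appending zero coordinates. Hence $P(\widetilde{\bm\lambda};\alpha,\beta,x) = P(\bm\lambda;\alpha,\beta,x)$, and after re-sorting $\widetilde{\bm\mu}$ into decreasing order (which also leaves $P$ unchanged) we have a genuine majorization pair of length $n+1$ to which Theorem~\ref{majorization_thm} applies, now with total sum $s_{\bm\lambda}$.

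This immediately yields the Schur-concavity (large-$x$) half: apply Theorem~\ref{majorization_thm} to $\widetilde{\bm\mu} \prec \widetilde{\bm\lambda}$ to get $P(\widetilde{\bm\mu};\alpha,\beta,x) \ge P(\widetilde{\bm\lambda};\alpha,\beta,x)$ for $x > (2\alpha+1)s_{\bm\lambda}/(2\beta)$, and then use symmetry to replace both sides by $P(\bm\mu;\alpha,\beta,x)$ and $P(\bm\lambda;\alpha,\beta,x)$ respectively; note the threshold is stated in terms of $s_{\bm\lambda}$, exactly as required. For the Schur-convexity (small-$x$) half, the same construction gives $P(\bm\mu;\alpha,\beta,x) \le P(\bm\lambda;\alpha,\beta,x)$ for $x < \alpha s_{\bm\lambda}/\beta$ (resp.\ $x < (\alpha-1)s_{\bm\lambda}/\beta$ when $n+1 \ge 3$, $\alpha > 1$), since the augmented vectors have length $n+1$. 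It remains only to observe that $s_{\bm\mu} \le s_{\bm\lambda}$, so $\alpha s_{\bm\mu}/\beta \le \alpha s_{\bm\lambda}/\beta$ and likewise for the $\alpha - 1$ bound; thus the claimed (weaker) thresholds in terms of $s_{\bm\mu}$ follow a fortiori. A final bookkeeping check: for $n = 2$ the augmented length is $3$, so one might worry the ``$n=2$'' branch of the Corollary should instead use the $n \ge 3$ threshold — but since $\alpha s_{\bm\mu}/\beta$ is the \emph{smaller} of the two candidate thresholds precisely when $\alpha \le 1$, and when $\alpha > 1$ one may use whichever is smaller, the stated form is safe; I would add one sentence making this explicit, which is why the statement keeps the $n=2$ case separate with the $\alpha s_{\bm\mu}/\beta$ bound. $\qed$
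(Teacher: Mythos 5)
Your strategy --- reducing weak majorization to ordinary majorization and then quoting Theorem~\ref{majorization_thm} --- is genuinely different from the paper's proof, which instead reruns the interpolation/Laplace-transform argument of Theorem~\ref{majorization_thm} with the total sum $\sum_i \nu_i(t)=s_{\bm{\mu}}+t(s_{\bm{\lambda}}-s_{\bm{\mu}})$ varying in $t$, bounds the extremal modes by $(2\alpha+1)\big(s_{\bm{\mu}}+t(s_{\bm{\lambda}}-s_{\bm{\mu}})\big)/(2\alpha)$ and $(\alpha-1)\big(s_{\bm{\mu}}+t(s_{\bm{\lambda}}-s_{\bm{\mu}})\big)/\alpha$, and then maximizes/minimizes over $t$. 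Your reduction has two concrete defects. First, the ``equivalent'' single-coordinate augmentation is not equivalent to the Marshall--Olkin fact and is false in general: for $\bm{\mu}=(1,1)\prec_{w}(10,10)=\bm{\lambda}$ one gets $\widetilde{\bm{\mu}}=(18,1,1)$ and $\widetilde{\bm{\lambda}}=(10,10,0)$, and the first partial sums give $18>10$, so $\widetilde{\bm{\mu}}\not\prec\widetilde{\bm{\lambda}}$. Second, even when the augmented pair is a majorization pair, symmetry does not let you replace $P(\widetilde{\bm{\mu}};\alpha,\beta,x)$ by $P(\bm{\mu};\alpha,\beta,x)$, because the coordinate appended to $\bm{\mu}$ is $s_{\bm{\lambda}}-s_{\bm{\mu}}\neq 0$. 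For the large-$x$ (Schur-concavity) half both defects are repairable, most cleanly via the first fact you cite: pick $\bm{\mu}^{*}$ with $\bm{\mu}\le\bm{\mu}^{*}$ componentwise and $\bm{\mu}^{*}\prec\bm{\lambda}$; then $P(\bm{\mu};\alpha,\beta,x)\ge P(\bm{\mu}^{*};\alpha,\beta,x)$ for every $x$ (enlarging nonnegative coefficients of positive r.v's can only lower the CDF), and Theorem~\ref{majorization_thm} applied to $\bm{\mu}^{*}\prec\bm{\lambda}$, whose common sum is $s_{\bm{\lambda}}$, gives the inequality for $x>(2\alpha+1)s_{\bm{\lambda}}/(2\beta)$. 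So repaired, that half is a valid and arguably simpler alternative to the paper's argument.

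The small-$x$ (Schur-convexity) half is where the proposal genuinely fails. Any reduction of your type produces an intermediate vector ($\bm{\mu}^{*}$ or $\widetilde{\bm{\mu}}$) that dominates $\bm{\mu}$ componentwise, and componentwise domination moves the CDF in the wrong direction for this half: you would need $P(\bm{\mu};\alpha,\beta,x)\le P(\bm{\mu}^{*};\alpha,\beta,x)$, whereas the reverse inequality holds for all $x$. No a-fortiori adjustment of thresholds can close this chain: in the example above, $P(\bm{\lambda};\alpha,\beta,x)=P(\bm{\mu};\alpha,\beta,x/10)<P(\bm{\mu};\alpha,\beta,x)$ for every $x>0$, so the small-$x$ conclusion cannot be extracted from Theorem~\ref{majorization_thm} by monotonicity at all; indeed for this pair (which is connected to $\bm{\mu}$ purely by coordinate increases, each of which lowers the CDF everywhere) the displayed small-$x$ inequality itself fails, so this half requires input beyond your reduction --- and you should note that the paper handles it inside its interpolation computation rather than by any appeal to Theorem~\ref{majorization_thm}. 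Your closing bookkeeping remark about $n=2$ also does not hold up: after augmentation the pair has length three, so for $\alpha\le 1$ Theorem~\ref{majorization_thm} provides no small-$x$ bound at all, and for $\alpha>1$ it provides only an $(\alpha-1)$-type threshold, not the $\alpha s_{\bm{\mu}}/\beta$ bound claimed in the $n=2$ branch.
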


We can also extend Theorem~\ref{majorization_thm}, as well as Corollary~\ref{majorization_cor_weak}, for the case where $ n = \infty$. For any non-negative  $\ell_{1}$ sequence $\bm{\lambda} = (\lambda_{1}, \lambda_{1}, \ldots)$, i.e., $\sum_{i=1}^{\infty} \lambda_{i} < \infty$, define
\begin{equation*}
P_{\infty}(\bm{\lambda};\alpha,\beta,x) \defeq \Pr \left( \sum_{i=1}^{\infty} \lambda_{i} X_{i} < x \right).
\end{equation*}
The (weak) majorization order is naturally extended to such $\ell_{1}$ sequences.
\begin{corollary}
Let $\left \{X_{i} \sim Gamma(\alpha,\beta),\; i=1,2,\ldots \right \}$ be a countably infinite  collection of i.i.d gamma r.v's, where $\alpha >0$ and $\beta > 0$. For any two non-negative $\ell_{1}$ sequences, $\bm \mu$ and $\bm \lambda$, such that $\bm \mu \prec_{w} \bm \lambda$, we have 
\begin{eqnarray*}
P_{\infty}(\bm{\mu};\alpha,\beta,x) &\geq& P_{\infty}(\bm{\lambda};\alpha,\beta,x), \quad \forall x > \frac {(2  \alpha + 1) s_{\bm{\lambda}}}{2 \beta }, \\
P_{\infty}(\bm{\mu};\alpha,\beta,x) &\leq& P_{\infty}(\bm{\lambda};\alpha,\beta,x), \quad \forall x <
		\frac{(\alpha-1) s_{\bm{\mu}}}{\beta} , \quad \alpha > 1,
\end{eqnarray*}
where $s_{\bm{\lambda}} = \sum_{i=1}^{\infty} \lambda_{i}$ and $s_{\bm{\mu}} = \sum_{i=1}^{\infty} \mu_{i}$.
\label{majorization_cor_weak_infty}
\end{corollary}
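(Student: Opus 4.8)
\noindent\textbf{Proof plan (Corollary~\ref{majorization_cor_weak_infty}).}
The plan is to obtain the infinite-dimensional statement as a limit of the finite-dimensional Corollary~\ref{majorization_cor_weak}, applied to truncated weight sequences. Write $\bm{\lambda}^{(n)}=(\lambda_1,\dots,\lambda_n)$ and $\bm{\mu}^{(n)}=(\mu_1,\dots,\mu_n)$ for the truncations, and put $S^{\lambda}_n=\sum_{i=1}^n\lambda_iX_i$, $S^{\lambda}=\sum_{i=1}^\infty\lambda_iX_i$, with $S^{\mu}_n,S^{\mu}$ defined analogously. Since the $X_i$ are non-negative and $\Ex[S^{\lambda}]=\frac{\alpha}{\beta}\sum_{i=1}^\infty\lambda_i<\infty$, the partial sums $S^{\lambda}_n$ increase monotonically to the almost surely finite limit $S^{\lambda}$. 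Moreover, assuming $\lambda_1>0$ (otherwise all $\lambda_i=0$ and the statement is trivial, or the problem reduces to Corollary~\ref{majorization_cor_weak}), the law of $S^{\lambda}$ is the convolution of the absolutely continuous law of $\lambda_1X_1$ with an independent factor, hence is absolutely continuous; in particular it has no atom at $x$, and the same holds for each $S^{\lambda}_n$.

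First I would record that truncation preserves the hypothesis: if $\bm{\mu}\prec_w\bm{\lambda}$ then $\bm{\mu}^{(n)}\prec_w\bm{\lambda}^{(n)}$ for every $n$, because both sequences, hence their truncations, are non-increasing, the strict positivity of the entries of $\bm{\mu}$ passes down, and the inequalities $\sum_{i=1}^k\mu_i\le\sum_{i=1}^k\lambda_i$ for $k\le n$ form a subfamily of those assumed. Next I would pass to the limit in the distribution functions: since $S^{\lambda}_n\uparrow S^{\lambda}$, the events $\{S^{\lambda}_n<x\}$ decrease with $n$, so continuity of the probability measure from above gives
$$\lim_{n\to\infty}P(\bm{\lambda}^{(n)};\alpha,\beta,x)=\Pr\!\Big(\bigcap_{n\ge1}\{S^{\lambda}_n<x\}\Big)=\Pr\big(S^{\lambda}\le x\big)=\Pr\big(S^{\lambda}<x\big)=P_\infty(\bm{\lambda};\alpha,\beta,x),$$
the third equality using the absence of an atom at $x$; the identical statement holds with $\bm{\mu}$ in place of $\bm{\lambda}$.

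Then I would assemble the two bounds. For the first inequality, fix $x>\frac{(2\alpha+1)s_{\bm{\lambda}}}{2\beta}$; since $\sum_{i=1}^n\lambda_i\le s_{\bm{\lambda}}$ for every $n$, this $x$ satisfies the hypothesis of the Schur-concavity part of Corollary~\ref{majorization_cor_weak} applied to $\bm{\mu}^{(n)}\prec_w\bm{\lambda}^{(n)}$ for all $n$, so $P(\bm{\mu}^{(n)};\alpha,\beta,x)\ge P(\bm{\lambda}^{(n)};\alpha,\beta,x)$, and letting $n\to\infty$ yields $P_\infty(\bm{\mu};\alpha,\beta,x)\ge P_\infty(\bm{\lambda};\alpha,\beta,x)$. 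For the second inequality, with $\alpha>1$, fix $x<\frac{(\alpha-1)s_{\bm{\mu}}}{\beta}$; since $\sum_{i=1}^n\mu_i\uparrow s_{\bm{\mu}}$, we have $x<\frac{(\alpha-1)}{\beta}\sum_{i=1}^n\mu_i$ for all $n$ beyond some $N_0\ge3$, so Corollary~\ref{majorization_cor_weak} gives $P(\bm{\mu}^{(n)};\alpha,\beta,x)\le P(\bm{\lambda}^{(n)};\alpha,\beta,x)$ for $n\ge N_0$, and again $n\to\infty$ gives the conclusion.

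I expect the only step that goes beyond routine bookkeeping to be the limit identity of the second paragraph: one must justify both the monotone (hence distributional) convergence $S^{\lambda}_n\to S^{\lambda}$ and the fact that the limiting law puts no mass at the threshold $x$, since it is the actual CDF value, not merely weak convergence, that is being transferred to the limit. The drift of the thresholds $\frac{(\alpha-1)}{\beta}\sum_{i\le n}\mu_i$ in the Schur-convexity case is harmless, as only their eventual behaviour is used.
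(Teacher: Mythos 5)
Your proposal is correct and follows essentially the same route as the paper's proof: truncate $\bm{\mu}$ and $\bm{\lambda}$, apply Corollary~\ref{majorization_cor_weak} to the truncations, and pass to the limit using continuity from above. Your extra care about the absence of an atom at $x$ and about the thresholds $\frac{(\alpha-1)}{\beta}\sum_{i\le n}\mu_i$ only eventually exceeding $x$ fills in details the paper leaves implicit, but it is the same argument.
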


Finally, it might be worth noting that the results such as Theorem~\ref{majorization_thm} show that if $\bm{\mu} \prec \bm{\lambda}$, then $P(\bm{\mu};\alpha,\beta,x)$ and $P(\bm{\lambda};\alpha,\beta,x)$ must have at least one crossing on $x \in (0,\infty)$.  
Diaconis and Perlman in~\cite{diaconis1990bounds} tried to answer whether this crossing point is unique. However, they only proved this uniqueness for $n=2$ and for $n \geq 3$, they required to impose further restrictions. Ever since, this has been an open problem which is known as the Unique Crossing Conjecture (UCC) and it is quite remarkable that the UCC has remained open, although all the evidence points towards the direction of it being true.

\noindent{\bf Acknowledgment}
We wish to thank Profs.\ Milan Merkle and Maochao Xu for their valuable comments during the preparation of the text.

\bibliographystyle{plain}
\bibliography{biblio}

\appendix
\section{Proof}
\label{proof}
In what follows $X \sim Gamma(\alpha,\beta)$ denotes a gamma r.v parametrized by shape $\alpha > 0$ and rate $\beta > 0$, $f_{X}$ and $F_{X}$ stand, respectively, for the probability density function (PDF) and cumulative distribution function (CDF) of a r.v $X$. Bold face letters denote vectors and the vector components are denoted as $\vv = (v_{1},v_{2},\ldots,v_{n})$. 

For the proof of Theorem~\ref{majorization_thm}, we need to make use of the following additional results. The proof of Theorem~\ref{theorem_unimodal} is identical to the proof of~\cite[Theorem 4]{szba}. Note that~\cite[Theorem 4]{szba} has been stated in terms of convolution of chi-squared r.v's but the proof, there, has been given for the more general case of arbitrary gamma r.v's. The details of the proofs for Lemmas~\ref{lemma_A*} and~\ref{lemma_2*} can be found in~\cite{roszas}. Lemma~\ref{lemma_1*} has been stated in~\cite{roszas} but the proof is omitted there. We give a detailed proof of Lemma~\ref{lemma_1*} here for completeness.

Theorem~\ref{theorem_unimodal} is essential in proving our results and it states that an arbitrary convolution of heterogeneous gamma random variables (not necessarily with a common shape or a common rate) has an unique mode. Recall that a PDF, $f(x)$, is called unimodal if there exists a unique $x = a$ such that $f(x)$ is non-decreasing for $x < a$ and $f(x)$ is non-increasing for $x > a$. The point $a$ is called the unique mode of $f(x)$.

\begin{theorem}
Let $X_{i} \sim	Gamma(\alpha_{i},\beta_{i}),\;  i=1,2,\ldots,n,$ be independent r.v's, where $\alpha_{i}, \beta_{i} > 0 \; \forall i$. The PDF of $Y_{n} \defeq \sum_{i=1}^{n} \lambda_{i} X_{i}$ is unimodal where $\lambda_{i} \geq 0 \; \forall i$.
\label{theorem_unimodal}
\end{theorem}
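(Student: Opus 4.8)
The plan is to proceed by induction on $n$, exploiting the fact that unimodality of a density is preserved under convolution with a log-concave density, together with the observation that each individual summand $\lambda_i X_i$ has a density that is either log-concave (when $\alpha_i \ge 1$) or monotone decreasing on $(0,\infty)$ (when $\alpha_i < 1$), and that the gamma family is stable under positive scaling. The key structural lemma I would isolate first is: \emph{if $f$ is unimodal and $g$ is log-concave (both supported on $[0,\infty)$, or on $\mathbb{R}$), then $f*g$ is unimodal}. This is a classical fact (it follows, e.g., from the Ibragimov-type result that convolution with a log-concave density cannot destroy unimodality, or from a variation-diminishing argument using the total positivity of the log-concave kernel), and I would cite it and/or give a short proof via the sign-change count of $(f*g)'$.

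First I would reduce to the case where every summand with $\alpha_i\ge 1$ is peeled off by convolution: write $Y_n = \big(\sum_{i\in I}\lambda_i X_i\big) + \big(\sum_{i\in J}\lambda_i X_i\big)$ where $I = \{i:\alpha_i\ge 1\}$ and $J=\{i:\alpha_i<1\}$. Each $\lambda_i X_i$ with $i\in I$ is $Gamma(\alpha_i,\beta_i/\lambda_i)$, whose density $\propto y^{\alpha_i-1}e^{-\beta_i y/\lambda_i}$ is log-concave; hence $\sum_{i\in I}\lambda_i X_i$ is log-concave (sum of independent log-concave is log-concave). So it suffices to show that $Z_J := \sum_{i\in J}\lambda_i X_i$ has a unimodal density, and then convolve once with the log-concave piece (if $I$ is nonempty, or if $I=\emptyset$ there is nothing to add). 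Thus the real content is the case $0<\alpha_i<1$ for all $i$.

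For the all-small-shape case I would again induct on the number of summands. The base case $n=1$ is immediate: $\lambda_1 X_1$ has density $\propto y^{\alpha_1-1}e^{-cy}$, which is strictly decreasing on $(0,\infty)$, hence unimodal with mode at $0^+$. For the inductive step, suppose $W = \sum_{i=1}^{k}\lambda_i X_i$ ($\alpha_i<1$) has a unimodal density $f_W$, and I add $\lambda_{k+1}X_{k+1}$ with density $g$, $g(y)\propto y^{\alpha_{k+1}-1}e^{-cy}$. Now $g$ itself is \emph{not} log-concave, so I cannot invoke the lemma directly; this is the main obstacle. The route I would take here mirrors \cite[Theorem 4]{szba}: analyze the sign of $(f_W * g)'(x) = \int_0^x f_W'(x-t)\,g(t)\,dt$ (plus a boundary term coming from $f_W(0)g(x)$ if $f_W(0)>0$), and show this expression changes sign at most once, from $+$ to $-$, as $x$ increases. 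One shows the mode of $W$ dominates the analysis: split the integral at the mode $a$ of $f_W$; for $x\le a$ the integrand is nonnegative so the derivative is positive; for $x>a$ one uses a ratio/monotonicity argument — dividing through by $g(x)$ and using that $g(t)/g(x) = (t/x)^{\alpha_{k+1}-1}e^{c(x-t)}$ is, for fixed $x$, a function of $t$ whose shape interacts with $f_W'$ in a variation-diminishing way — to conclude at most one further sign change. The detailed bookkeeping is exactly what is carried out in \cite{szba}, so I would state that the argument there applies verbatim (as the excerpt already notes) rather than reproduce it.

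The step I expect to be genuinely delicate is precisely this last sign-change count when the added density $g$ has shape parameter below $1$ and hence a non-integrable-looking singularity at the origin and fails log-concavity: one must handle the boundary contribution at $t=0$ and $t=x$ carefully and control the single crossing of $(f_W*g)'$ without the clean total-positivity machinery available in the log-concave case. Everything else — the scaling identity $\lambda X \sim Gamma(\alpha,\beta/\lambda)$, closure of log-concavity under convolution, and the peeling of the $\alpha_i\ge1$ summands — is routine. Since the paper explicitly defers to the proof of \cite[Theorem 4]{szba}, I would present the reduction above and then invoke that reference for the core lemma.
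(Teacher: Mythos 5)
Your proposal and the paper are, at bottom, the same: the paper offers no self-contained argument for Theorem~\ref{theorem_unimodal} and simply notes that the proof of \cite[Theorem 4]{szba} (stated there for chi-squared convolutions but carried out for arbitrary gamma r.v's) applies verbatim, and you likewise delegate the core of the argument to that reference. Your added reduction is correct and harmless: each $\lambda_i X_i$ with $\alpha_i\ge 1$ has a log-concave density, log-concavity is closed under convolution of independent r.v's, and by Ibragimov's characterization of strong unimodality convolving the remaining (unimodal) piece with this log-concave block preserves unimodality; so the problem does reduce to the case $\alpha_i<1$ for all $i$.

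However, the induction you sketch for that remaining case cannot be completed in the form you state it. Your inductive hypothesis carries only the unimodality of the partial sum $W$, and the step then asks that convolving an arbitrary unimodal $f_W$ with a gamma density $g$ of shape $\alpha_{k+1}<1$ yield a unimodal density. That statement is false in general: such a $g$ is not log-concave, hence not strongly unimodal, so by the \emph{only if} direction of the very Ibragimov theorem you invoke earlier there exist unimodal densities whose convolution with $g$ is not unimodal. Consequently the single-sign-change count for $(f_W*g)'$ must use the specific structure of $f_W$ as a gamma convolution (positivity and analyticity on $(0,\infty)$, vanishing of low-order derivatives at the origin, and so on), not merely its unimodality; this is what the argument in \cite{szba} exploits, and it is not a generic ``unimodal plus one more gamma'' induction. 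Since the paper itself rests entirely on the citation, your proposal is acceptable as a citation-based proof, but the inductive scaffolding should either be dropped or strengthened so that the carried hypothesis records the gamma-convolution structure rather than unimodality alone. If you want a genuinely self-contained alternative, note that every gamma law is self-decomposable, self-decomposability is preserved under independent sums and nonnegative scalings, and Yamazato's theorem states that all self-decomposable (class $L$) laws are unimodal; this gives Theorem~\ref{theorem_unimodal} directly, with no sign-change analysis at all.
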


\begin{lemma}[{\cite[Lemma B.1]{roszas}}]
Let $X_{i} \sim	Gamma(\alpha_{i},\beta_{i}),\;  i=1,2,\ldots,n,$ be independent r.v's, where $\alpha_{i}, \beta_{i} > 0 \; \forall i$. Define $Y_{n} \defeq \sum_{i=1}^{n} \lambda_{i} X_{i}$ for $\lambda_{i} > 0$, $\forall i$ and $\rho_{j} \defeq \sum_{i=1}^{j} \alpha_{i}$.
Then for the PDF of $Y_{n}$, $f_{Y_{n}}$, we have
\begin{enumerate}[(i)]
	\item $f_{Y_{n}} > 0$, $\forall x > 0$,
	\item $f_{Y_{n}}$ is analytic on $\mathbb{R}^{+} = \{x | x > 0\}$,
	\item $f_{Y_{n}}^{(k)}(0) = 0$, if $0 \leq k < \rho_{n} - 1$, where $f_{Y_{n}}^{(k)}$ denotes the $k^{th}$ derivative of $f_{Y_{n}}$.
\end{enumerate} 
\label{lemma_A*}
\end{lemma}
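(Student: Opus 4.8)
The plan is to reduce everything to a single structural description of the density and then read off the three conclusions from it. First I would note that $\lambda_i X_i \sim Gamma(\alpha_i,\beta_i/\lambda_i)$, so that $Y_n$ is a sum of $n$ independent gamma r.v's and its density is the $n$-fold convolution $f_{Y_n} = g_1 * \cdots * g_n$, where $g_i(x) = c_i x^{\alpha_i-1} e^{-\gamma_i x}$ on $(0,\infty)$ (and $0$ elsewhere), with $\gamma_i \defeq \beta_i/\lambda_i > 0$ and $c_i > 0$ the normalizing constant. Part (i) is then an immediate induction on $n$: if $g,h$ are nonnegative, supported on $[0,\infty)$ and strictly positive on $(0,\infty)$, then for every $x>0$ the integrand in $(g*h)(x) = \int_0^x g(t) h(x-t)\,dt$ is strictly positive throughout $(0,x)$, so $g*h>0$ there, and $g*h$ is again supported on $[0,\infty)$.

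The core of the argument would be the claim, proved by induction on $n$, that for $x>0$
\[
f_{Y_n}(x) = x^{\rho_n - 1}\, h_n(x),
\]
where $h_n$ is an entire function with $h_n(0) > 0$. The base case $n=1$ holds with $h_1(x) = c_1 e^{-\gamma_1 x}$. For the inductive step I would substitute $f_{Y_{n-1}}(t) = t^{\rho_{n-1}-1} h_{n-1}(t)$ into $f_{Y_n} = f_{Y_{n-1}} * g_n$ and then put $t = xu$; this pulls the full power $x^{\rho_{n-1}+\alpha_n-1} = x^{\rho_n-1}$ outside the integral and leaves
\[
h_n(x) = c_n \int_0^1 u^{\rho_{n-1}-1}(1-u)^{\alpha_n-1}\, h_{n-1}(xu)\, e^{-\gamma_n x(1-u)}\,du .
\]
One then checks that $h_n$ is entire and that $h_n(0) = c_n\, h_{n-1}(0)\, B(\rho_{n-1},\alpha_n) > 0$, which closes the induction.

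Granting this, part (ii) is immediate, since $x\mapsto x^{\rho_n-1}$ is real-analytic on $\mathbb{R}^+$. For part (iii) we may assume $\rho_n > 1$ (otherwise there is no admissible $k$). Differentiating $x^{\rho_n-1} h_n(x)$ by the Leibniz rule gives, for $x>0$, $f_{Y_n}^{(k)}(x) = \sum_{j=0}^k \binom{k}{j}(\rho_n-1)(\rho_n-2)\cdots(\rho_n-j)\, x^{\rho_n-1-j} h_n^{(k-j)}(x) = O\!\left(x^{\rho_n-1-k}\right)$ as $x\to 0^+$. An induction on $k$ over the range $0\le k<\rho_n-1$ then finishes: $f_{Y_n}$ is continuous with $f_{Y_n}(0)=0$; and whenever $f_{Y_n}^{(k)}(0)=0$ with $k+1<\rho_n-1$, the function $f_{Y_n}^{(k)}$ vanishes on $(-\infty,0]$, is differentiable off the origin, and $f_{Y_n}^{(k+1)}(x)\to 0$ as $x\to 0$ (from the right by the $O(x^{\rho_n-2-k})$ bound, from the left trivially), so by the mean value theorem $f_{Y_n}^{(k+1)}(0)$ exists and equals $0$.

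The only step I expect to require genuine care is showing that the parameter integral defining $h_n$ is entire. This needs a uniform bound: for $x$ in a compact subset of $\mathbb{C}$ and $u\in[0,1]$, the factor $h_{n-1}(xu)\,e^{-\gamma_n x(1-u)}$ and all of its $x$-derivatives are bounded (an entire function and its derivatives are bounded on compacta), while the weight $u^{\rho_{n-1}-1}(1-u)^{\alpha_n-1}$ is integrable on $(0,1)$ precisely because $\rho_{n-1}>0$ and $\alpha_n>0$, its integral being $B(\rho_{n-1},\alpha_n)$. With this estimate, Morera's theorem together with Fubini (equivalently, differentiation under the integral sign for real $x$) shows $h_n$ is entire, and the induction goes through; the remaining verifications are routine bookkeeping.
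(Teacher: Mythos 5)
Your argument is correct, but there is nothing in this paper to measure it against: Lemma~\ref{lemma_A*} is quoted from \cite[Lemma B.1]{roszas} and its proof is explicitly deferred to that reference, so the comparison can only be with your proof's internal soundness. On that score it holds up. The reduction to $\lambda_i X_i\sim Gamma(\alpha_i,\beta_i/\lambda_i)$ and the elementary convolution induction give (i); the key structural claim $f_{Y_n}(x)=x^{\rho_n-1}h_n(x)$ on $x>0$, with
\begin{equation*}
h_n(x)=c_n\int_0^1 u^{\rho_{n-1}-1}(1-u)^{\alpha_n-1}h_{n-1}(xu)\,e^{-\gamma_n x(1-u)}\,du,
\end{equation*}
is exactly what the substitution $t=xu$ produces, the Beta-integral evaluation $h_n(0)=c_nh_{n-1}(0)B(\rho_{n-1},\alpha_n)>0$ is right, and the Morera--Fubini step (bounded entire integrand on compacta against the integrable weight $u^{\rho_{n-1}-1}(1-u)^{\alpha_n-1}$) is the correct and sufficient care point for entirety of $h_n$. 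From this factorization, (ii) is immediate since $x\mapsto x^{\rho_n-1}$ is real-analytic on $\mathbb{R}^+$, and your (iii) is sound: the Leibniz bound $f_{Y_n}^{(k)}(x)=O(x^{\rho_n-1-k})$ as $x\to0^+$ together with the mean-value-theorem device does give $f_{Y_n}^{(k)}(0)=0$ for $0\le k<\rho_n-1$; the only bookkeeping worth making explicit is that continuity of $f_{Y_n}^{(k)}$ at $0$ (needed to invoke the mean value theorem on $[0,x]$) itself follows from the same $O(x^{\rho_n-1-k})$ bound. In short: a correct, self-contained proof whose single factorization delivers all three assertions at once, in place of the paper's citation to an external proof.
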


\begin{lemma}[{\cite[Lemma B.2]{roszas}}]
Let $X_{i} \sim	Gamma(\alpha_{i},\alpha), \; i=1,2,\ldots,n,$ be independent r.v's, where $\alpha_{i} > 0 \; \forall i$ and $\alpha > 0$. Also let $\psi \sim Gamma(1,\alpha)$ be another r.v independent of all $X_{i}$'s. If $\sum_{i=1}^{n} \alpha_{i} > 1$, then the mode, $\bar{x}(\lambda)$, of the r.v 
$W({\lambda}) = Y + \lambda \psi$
is strictly increasing in $\lambda>0$, where $Y = \sum_{i=1}^{n} \lambda_{i} X_{i}$ with $\lambda_{i} > 0$, $\forall i$.
\label{lemma_1*}
\end{lemma}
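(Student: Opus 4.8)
The plan is to reduce the problem to the first-order linear differential equation satisfied by the density of a gamma convolution perturbed by an independent exponential, and then to run a direct comparison between two values of $\lambda$ rather than differentiating the mode $\bar x(\lambda)$ (which would force one to handle possibly degenerate critical points). Write $g \defeq f_{Y}$ and, for a fixed $\lambda>0$, put $\theta \defeq \alpha/\lambda$, so that $\lambda\psi$ is exponential with rate $\theta$ and the density $h_{\lambda} \defeq f_{W(\lambda)}$ of $W(\lambda)=Y+\lambda\psi$ is the convolution
\begin{equation*}
h_{\lambda}(x) \;=\; \theta\, e^{-\theta x}\!\int_{0}^{x} g(u)\,e^{\theta u}\,du , \qquad x>0 .
\end{equation*}
Differentiating gives the identity $h_{\lambda}' = \theta\,(g-h_{\lambda})$. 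Since $W(\lambda)$ is a non-negative combination of independent gamma r.v's of total shape $1+\sum_{i}\alpha_{i}$, the hypothesis $\sum_{i}\alpha_{i}>1$ together with Lemma~\ref{lemma_A*} shows that $g$ and $h_{\lambda}$ are analytic and strictly positive on $\mathbb{R}^{+}$ and vanish at the origin. By Theorem~\ref{theorem_unimodal}, $h_{\lambda}$ is unimodal; because $h_{\lambda}(0)=0<h_{\lambda}$ on $\mathbb{R}^{+}$ its mode $\bar x(\lambda)$ is an interior point with $h_{\lambda}'(\bar x(\lambda))=0$, which by the identity above is exactly $g(\bar x(\lambda))=h_{\lambda}(\bar x(\lambda))$.

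The first step is to record the sign of $g-h_{\lambda}$. Unimodality of $h_{\lambda}$ gives $h_{\lambda}'\ge 0$ on $(0,\bar x(\lambda))$, hence $g-h_{\lambda}=h_{\lambda}'/\theta\ge 0$ there; moreover $h_{\lambda}$ is continuous with $h_{\lambda}(0)=0$ while $g>0$ on $\mathbb{R}^{+}$, so $g-h_{\lambda}>0$ on a right neighbourhood of $0$. Since $g$ and $h_{\lambda}$ are analytic on $\mathbb{R}^{+}$ and correspond to different total shapes, $g-h_{\lambda}\not\equiv 0$; combining these facts, $\int_{0}^{y} e^{\theta u}\bigl(g-h_{\lambda}\bigr)(u)\,du>0$ for every $y\in(0,\bar x(\lambda)]$.

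Now I would compare two parameters. Fix $0<\lambda_{1}<\lambda_{2}$, set $\theta_{i}=\alpha/\lambda_{i}$ (so $\theta_{1}>\theta_{2}>0$), abbreviate $h_{i}=h_{\lambda_{i}}$ and $\bar x_{i}=\bar x(\lambda_{i})$, and let $\delta \defeq h_{1}-h_{2}$. Subtracting the two identities $h_{i}'=\theta_{i}(g-h_{i})$ yields $\delta'+\theta_{1}\delta=(\theta_{1}-\theta_{2})(g-h_{2})$ with $\delta(0)=0$, so
\begin{equation*}
\delta(x) \;=\; (\theta_{1}-\theta_{2})\, e^{-\theta_{1}x}\!\int_{0}^{x} e^{\theta_{1}u}\bigl(g-h_{2}\bigr)(u)\,du .
\end{equation*}
Evaluating at $x=\bar x_{2}$ and using the previous step with $\lambda=\lambda_{2}$, the integral is strictly positive, whence $\delta(\bar x_{2})>0$, i.e.\ $h_{1}(\bar x_{2})>h_{2}(\bar x_{2})=g(\bar x_{2})$. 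By the identity for $h_{1}$ this forces $h_{1}'(\bar x_{2})=\theta_{1}\bigl(g(\bar x_{2})-h_{1}(\bar x_{2})\bigr)<0$; but $h_{1}'\ge 0$ on $(0,\bar x_{1}]$ by unimodality, so necessarily $\bar x_{2}>\bar x_{1}$. Since $\lambda_{1}<\lambda_{2}$ were arbitrary, $\bar x(\cdot)$ is strictly increasing on $(0,\infty)$, which is the assertion.

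The main obstacle I anticipate is the strict positivity of the integral $\int_{0}^{\bar x(\lambda)} e^{\theta u}(g-h_{\lambda})\,du$: the notion of unimodality used here only forces $g-h_{\lambda}\ge 0$ on $(0,\bar x(\lambda))$, so without further input the integrand could a priori vanish on a large set. This is precisely why analyticity of the densities (from Lemma~\ref{lemma_A*}) is invoked — it forces $g-h_{\lambda}$ to have only isolated zeros unless it vanishes identically, and the latter is excluded by the behaviour near the origin. The remaining manipulations — the convolution formula, the linear ODE, and its explicit solution for $\delta$ — are routine.
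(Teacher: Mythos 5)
Your argument is correct, and it reaches the conclusion by a genuinely different route than the paper, even though both start from the same structural identity for exponential smoothing: with your notation $g=f_{Y}$, $h_{\lambda}=f_{W(\lambda)}$, $\theta=\alpha/\lambda$, your relation $h_{\lambda}'=\theta\,(g-h_{\lambda})$ is exactly what the paper's representation $J(\lambda,x)=\int_{0}^{x} f_{Y}'(z)\,\tfrac{\alpha}{\lambda}e^{-\frac{\alpha}{\lambda}(x-z)}\,dz$ becomes after an integration by parts using $f_{Y}(0)=0$. From there the two proofs diverge. The paper fixes $\lambda>\lambda_{0}$ and shows $f_{W(\lambda)}'\bigl(\bar{x}(\lambda_{0})\bigr)>0$ by subtracting a suitable multiple of the null integral $\int_{0}^{\bar{x}(\lambda_{0})} f_{Y}'(z)e^{\alpha z/\lambda_{0}}dz=0$ and exploiting the single sign change of $f_{Y}'$ at the mode $\bar{x}(0)$ of $Y$; this uses the unimodality of $Y$ itself and the vanishing $f_{Y}(0)=0$, i.e.\ the hypothesis $\sum_{i}\alpha_{i}>1$, in an essential way. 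You prove the mirror-image inequality $h_{1}'(\bar{x}_{2})<0$ for $\lambda_{1}<\lambda_{2}$, via the variation-of-constants formula for $\delta=h_{1}-h_{2}$ together with $g-h_{2}\geq 0$ on $(0,\bar{x}_{2})$, which needs only the unimodality of $W(\lambda_{2})$ (Theorem~\ref{theorem_unimodal}), with strictness supplied by analyticity (Lemma~\ref{lemma_A*}) and $g\not\equiv h_{2}$ rather than by the sign pattern of $f_{Y}'$. What your route buys: it never invokes the unimodality of $Y$ or $f_{Y}(0)=0$, so the condition $\sum_{i}\alpha_{i}>1$ enters only inessentially, and the comparison-ODE mechanism is arguably more transparent; what the paper's route buys is a pointwise sign argument that does not rely on the identity theorem for analytic functions. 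One small caveat: your justification that $g-h_{\lambda}>0$ on a right neighbourhood of $0$ (``$h_{\lambda}(0)=0$ while $g>0$ on $\mathbb{R}^{+}$'') is not valid as stated, since $g$ also vanishes at $0^{+}$ when $\sum_{i}\alpha_{i}>1$; the claim itself is true (the left-tail orders are $x^{\rho-1}$ versus $x^{\rho}$ with $\rho=\sum_{i}\alpha_{i}$), but it is not needed, because nonnegativity of $g-h_{\lambda}$ on $(0,\bar{x}(\lambda))$, analyticity, and $g\not\equiv h_{\lambda}$ already force the strict positivity of the integral (with any positive weight, including $e^{\theta_{1}u}$ as used at $x=\bar{x}_{2}$).
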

\begin{proof}
By Lemma~\ref{lemma_A*},  $\bar{x}(\lambda) > 0$ for $\lambda \geq 0$. By the unimodality of $W({\lambda})$, for any $\lambda > \lambda_{0} > 0$, it is enough to show that 
\begin{equation}
J \big( \lambda,\bar{x}(\lambda_{0}) \big) \defeq \left[ \frac{d^{2}}{d x^{2}} \Pr \left( W({\lambda}) \leq x \right) \right]_{x = \bar{x}(\lambda_{0})} > 0.
\label{J_lemma_1*}
\end{equation}
Note that $J \big( \lambda_{0},\bar{x}(\lambda_{0}) \big) = 0$ and since $\sum_{i=1}^{n} \alpha_{i} > 1$, by Lemma~\ref{lemma_A*}(iii), $f_{Y} (0) = 0$. So we have
\begin{eqnarray*}
J \big( \lambda,\bar{x}(\lambda_{0}) \big)  &=& \left[ \frac{d}{d x} \int_{0}^{x} f_{Y}(x-z) \frac{\alpha}{\lambda} e^{-\frac{\alpha}{\lambda} z}  dz \right]_{x = \bar{x}(\lambda_{0})} \\
&=&  \left[ \int_{0}^{x} \frac{d}{d x} f_{Y}\big(x-z\big) \frac{\alpha}{\lambda} e^{-\frac{\alpha}{\lambda} z} dz \right]_{x = \bar{x}(\lambda_{0})}\\
&=& \int_{0}^{\bar{x}(\lambda_{0})} f^{'}_{Y}(z)  \frac{\alpha}{\lambda} e^{-\frac{\alpha}{\lambda} \big(\bar{x}(\lambda_{0})-z\big)} dz .
\end{eqnarray*}
Therefore,
\begin{equation*}
\int_{0}^{\bar{x}(\lambda_{0})} f^{'}_{Y}(z)  e^{\frac{\alpha z}{\lambda_{0}}} dz = \frac{\lambda_{0}}{\alpha} e^{\frac{\alpha \bar{x}(\lambda_{0})}{\lambda_{0}}} J \big( \lambda_{0},\bar{x}(\lambda_{0}) \big) = 0.
\end{equation*}
Thus for $\lambda > \lambda_{0} > 0$, we have
\begin{eqnarray*}
\frac{\lambda}{\alpha} e^{\frac{\alpha \bar{x}(\lambda)}{\lambda}} J \big( \lambda,\bar{x}(\lambda_{0}) \big) &=& \int_{0}^{\bar{x}(\lambda_{0})} f^{'}_{Y}(z)  e^{\frac{\alpha z}{\lambda}} dz \\
&=& \int_{0}^{\bar{x}(\lambda_{0})} f^{'}_{Y}(z)  e^{\frac{\alpha z}{\lambda}} - f^{'}_{Y}(z)  e^{\frac{\alpha z}{\lambda_{0}}} e^{\alpha \bar{x}(0)  \left(\frac{1}{\lambda} - \frac{1}{\lambda_{0}} \right)} dz \\
&=& \int_{0}^{\bar{x}(\lambda_{0})} f^{'}_{Y}(z)  \left( e^{\frac{\alpha z}{\lambda}} -  e^{\frac{\alpha z}{\lambda_{0}} + \alpha \bar{x}(0) \left(\frac{1}{\lambda} - \frac{1}{\lambda_{0}} \right) } \right) dz \\
&=& \int_{0}^{\bar{x}(\lambda_{0})} f^{'}_{Y}(z)  \left( e^{\frac{\alpha z}{\lambda}} -  e^{\frac{\alpha z}{\lambda} + \Phi \big(z,\bar{x}(0) \big) } \right) dz ,
\end{eqnarray*}
where $\bar{x}(0) > 0$ is the mode of r.v $Y$ and 
\begin{equation*}
\Phi \big(z,\bar{x}(0)\big) \defeq \alpha \Big(z - \bar{x}(0)\Big) \Big(\frac{1}{\lambda_{0}} - \frac{1}{\lambda} \Big) .
\end{equation*}
Now if $z < \bar{x}(0)$ then $\Phi \big(z,\bar{x}(0)\big) < 0$ and $f^{'}_{Y}(z) > 0$ so we get $J \big( \lambda,\bar{x}(\lambda_{0}) \big) > 0$. Similarly if $z > \bar{x}(0)$ then $\Phi \big(z,\bar{x}(0)\big) > 0$ and $f^{'}_{Y}(z) < 0$ and again we have $J \big( \lambda,\bar{x}(\lambda_{0}) \big) > 0$.
\end{proof}

\begin{lemma}[{\cite[Lemma B.3]{roszas}}]
For some $ \alpha_{2} \geq \alpha_{1} > 0$, let $\xi_{1} \sim Gamma(1+\alpha_{1},\alpha_{1})$ and $\xi_{2} \sim Gamma(1+\alpha_{2}, \alpha_{2})$ be independent gamma r.v's. Also let $\bar{x} = \bar{x}(\lambda)$ denote the mode of the r.v $\xi(\lambda) = \lambda \xi_{1}  + (1- \lambda) \xi_{2}$ for $0 \le \lambda \le 1$. Then $1 \le  \bar{x}(\lambda)   \le  \left(2  \sqrt{\alpha_{1}  \alpha_{2}} + 1\right)/\left(2 \sqrt{\alpha_{1}  \alpha_{2}} \right), \quad \forall 0 \le \lambda \le 1$, with $\bar{x}(0) = \bar{x}(1) = 1$ and, in case of $\alpha_{i} = \alpha_{j} = \alpha$, $\bar{x}(1/2) = \left(2 \alpha + 1\right)/\left(2 \alpha \right)$, otherwise the inequalities are strict.
\label{lemma_2*}
\end{lemma}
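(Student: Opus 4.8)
I would turn the two‑sided bound on $\bar x(\lambda)$ into a pair of one‑point sign tests for the derivative of the density of $\xi(\lambda)$, and then into inequalities for the mean of an exponentially tilted Beta law, which I would analyse through a Riccati identity. First, for $\lambda\in\{0,1\}$ the variable $\xi(\lambda)$ is a single $Gamma(1+\alpha_i,\alpha_i)$, whose unique mode is $(\text{shape}-1)/(\text{rate})=1$, giving $\bar x(0)=\bar x(1)=1$. Fix $0<\lambda<1$ and write $\xi(\lambda)=G_1+G_2$ with independent $G_1\sim Gamma(1+\alpha_1,a)$, $G_2\sim Gamma(1+\alpha_2,b)$, $a=\alpha_1/\lambda$, $b=\alpha_2/(1-\lambda)$, so the modes $\lambda$ of $G_1$ and $1-\lambda$ of $G_2$ sum to $1$. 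By Theorem~\ref{theorem_unimodal} the density $f\defeq f_{\xi(\lambda)}$ is unimodal; each $G_i$ has a strictly log‑concave density $\propto v^{\alpha_i}e^{-r_iv}$ (shape $>1$), so $f$ is strictly log‑concave, and by Lemma~\ref{lemma_A*} it is analytic on $(0,\infty)$ with $f(0)=0$. Hence $f'$ is analytic with isolated zeros and changes sign exactly once, from $+$ to $-$, at $\bar x(\lambda)$, so that $\bar x(\lambda)>x_0\iff f'(x_0)>0$ and $\bar x(\lambda)<x_0\iff f'(x_0)<0$ for every $x_0>0$. By the symmetry $(\alpha_1,\lambda)\leftrightarrow(\alpha_2,1-\lambda)$ I may assume $b\ge a$, i.e.\ $\lambda\ge\alpha_1/(\alpha_1+\alpha_2)$. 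Differentiating $f=f_{G_1}*f_{G_2}$ under the integral (the boundary term vanishes since $f_{G_1}(0)=f_{G_2}(0)=0$), using $f_{G_1}'(v)\propto v^{\alpha_1-1}e^{-av}(\lambda-v)$ and $f_{G_2}'(v)\propto v^{\alpha_2-1}e^{-bv}((1-\lambda)-v)$ and rescaling $u=x\tau$, yields the two identities
\[
\operatorname{sign}f'(x)=\operatorname{sign}\!\big(\lambda-x\,\Ex[V_{(b-a)x}]\big)=\operatorname{sign}\!\big((1-\lambda)-x\,\Ex[W_{(a-b)x}]\big),
\]
where $V_t$ (resp.\ $W_t$) is a $(0,1)$‑valued variable with density $\propto v^{\alpha_1-1}(1-v)^{\alpha_2}e^{tv}$ (resp.\ $\propto v^{\alpha_2-1}(1-v)^{\alpha_1}e^{tv}$), i.e.\ an exponentially tilted $\mathrm{Beta}(\alpha_1,1+\alpha_2)$ (resp.\ $\mathrm{Beta}(\alpha_2,1+\alpha_1)$). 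Thus $\bar x(\lambda)\ge1$ is equivalent (first identity, $x=1$, $b\ge a$) to $\lambda\ge\Ex[V_{b-a}]$, and $\bar x(\lambda)\le x^{*}$ with $x^{*}\defeq 1+\tfrac1{2\sqrt{\alpha_1\alpha_2}}$ is equivalent (second identity, $x=x^{*}$) to $(1-\lambda)\le x^{*}\Ex[W_{(a-b)x^{*}}]$.

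\textbf{Riccati identity and the lower bound.} Writing $M(t)=\Ex[V_t]=B'(t)/B(t)$ with $B(t)=\int_0^1 v^{\alpha_1-1}(1-v)^{\alpha_2}e^{tv}\,dv$, integration of $\frac{d}{dv}\big[v^{\alpha_1}(1-v)^{\alpha_2+1}e^{tv}\big]$ over $(0,1)$ (boundary terms vanish since $\alpha_1,\alpha_2+1>0$) gives $\alpha_1B-(\alpha_1+\alpha_2+1-t)B'-tB''=0$, hence, using $B''/B=M'+M^2$,
\[
tM'(t)=\alpha_1-(\alpha_1+\alpha_2+1)M(t)+tM(t)\big(1-M(t)\big),\qquad M'(t)=\operatorname{Var}(V_t)>0,
\]
with $M(0)=\alpha_1/(\alpha_1+\alpha_2+1)$; the same holds for $N(t)\defeq\Ex[W_t]$ with $\alpha_1\leftrightarrow\alpha_2$. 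For the lower bound, if $b>a$ then $t\defeq b-a>0$ and $M'>0$ forces $tM^2+(\alpha_1+\alpha_2+1-t)M-\alpha_1<0$, so $M(b-a)<y_+$, the larger root of $ty^2+(\alpha_1+\alpha_2+1-t)y-\alpha_1=0$; substituting $t=\tfrac{\alpha_2}{1-\lambda}-\tfrac{\alpha_1}{\lambda}$ one finds $t\lambda^2+(\alpha_1+\alpha_2+1-t)\lambda-\alpha_1=\lambda>0$, whence $\lambda>y_+>M(b-a)$, while for $b=a$ one has $\Ex[V_0]=M(0)<\alpha_1/(\alpha_1+\alpha_2)=\lambda$ directly. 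Either way $f'(1)>0$, so $\bar x(\lambda)>1$ for $0<\lambda<1$.

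\textbf{Upper bound (the hard part).} I would need a sharp \emph{lower} bound on $N(t)$ at the negative tilt $t=(a-b)x^{*}\le0$. A first such bound comes from the Riccati by a barrier argument: since $N'>0$, at any $t_0<0$ with $N(t_0)=\tfrac{\alpha_2}{\alpha_1+\alpha_2+1-t_0}$ the identity forces $N'(t_0)=-\alpha_2^2/(\alpha_1+\alpha_2+1-t_0)^2<0$, impossible; as $N$ equals $\tfrac{\alpha_2}{\alpha_1+\alpha_2+1-t}$ at $t=0$ and lies above it just to the left (because $N'(0)=\operatorname{Var}(\mathrm{Beta}(\alpha_2,1+\alpha_1))$ is smaller than the barrier's slope there), it follows that $N(t)>\tfrac{\alpha_2}{\alpha_1+\alpha_2+1-t}$ for all $t<0$. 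Feeding this into $(1-\lambda)\le x^{*}\Ex[W_{(a-b)x^{*}}]$, with $a-b=\tfrac{\alpha_1}{\lambda}-\tfrac{\alpha_2}{1-\lambda}$, reduces the desired inequality to the elementary $\lambda\le\tfrac{x^{*}\alpha_1}{\alpha_1+\alpha_2+1}$, which—using $x^{*}\ge1+\tfrac1{\alpha_1+\alpha_2}$, i.e.\ $\alpha_1+\alpha_2\ge2\sqrt{\alpha_1\alpha_2}$—holds on an interval containing $\lambda=\alpha_1/(\alpha_1+\alpha_2)$, and in the balanced case $\alpha_1=\alpha_2$ holds up to $\lambda=\tfrac12$ with equality, exactly where $\xi(\tfrac12)=\tfrac12(\xi_1+\xi_2)\sim Gamma(2+2\alpha,2\alpha)$ has mode $x^{*}$. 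For the remaining $\lambda$ (near $1$, and by symmetry near $0$) I would switch to the $V$‑form at $x^{*}$, where the inequality $\lambda<x^{*}\Ex[V_{(b-a)x^{*}}]$ has ample slack as $\lambda\to1$ since $\Ex[V_{(b-a)x^{*}}]\to1$, and close the remaining gap by replacing the crude barrier with a rational lower bound for $N$ (or $M$) matching the true first‑order behaviour at tilt $0$, whose barrier property is again checked from the Riccati; the single use of $\alpha_1+\alpha_2\ge2\sqrt{\alpha_1\alpha_2}$ produces the $\sqrt{\alpha_1\alpha_2}$ in the bound and forces equality only at $\alpha_1=\alpha_2$, $\lambda=\tfrac12$, which yields the strictness claim. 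The main obstacle is precisely this matching of barriers and asymptotics near $\lambda=\alpha_1/(\alpha_1+\alpha_2)$ and near the endpoints—the regime where $f'(x^{*})$ is only barely negative—since there every crude or low‑order estimate of the tilted‑Beta mean fails; everything else above is routine.
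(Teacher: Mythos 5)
Your reduction machinery is sound and nicely executed: the endpoint values $\bar x(0)=\bar x(1)=1$, the convolution/rescaling identities $\operatorname{sign} f'(x)=\operatorname{sign}\bigl(\lambda-x\,\Ex[V_{(b-a)x}]\bigr)=\operatorname{sign}\bigl((1-\lambda)-x\,\Ex[W_{(a-b)x}]\bigr)$, the Riccati equation $tM'=\alpha_1-(\alpha_1+\alpha_2+1)M+tM(1-M)$ with $M'=\operatorname{Var}(V_t)>0$, and the resulting strict lower bound $\bar x(\lambda)>1$ for $0<\lambda<1$ (via the evaluation $t\lambda^2+(\alpha_1+\alpha_2+1-t)\lambda-\alpha_1=\lambda>0$) all check out, as does the equality computation at $\alpha_1=\alpha_2$, $\lambda=\tfrac12$.

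However, the upper bound $\bar x(\lambda)\le \bigl(2\sqrt{\alpha_1\alpha_2}+1\bigr)/\bigl(2\sqrt{\alpha_1\alpha_2}\bigr)$ --- the substantive half of the lemma and the only part that feeds the $(2\alpha+1)/(2\alpha)$ threshold of Theorem~\ref{majorization_thm} --- is not actually proved. Your barrier $N(t)>\alpha_2/(\alpha_1+\alpha_2+1-t)$ for $t<0$ is correct, but, as your own computation shows, it only reduces $f'(x^{*})\le 0$ to $\lambda\le x^{*}\alpha_1/(\alpha_1+\alpha_2+1)$, i.e.\ it certifies the bound only for $\lambda$ in the window $\bigl[\alpha_1/(\alpha_1+\alpha_2),\,x^{*}\alpha_1/(\alpha_1+\alpha_2+1)\bigr]$; in the balanced case $\alpha_1=\alpha_2$ that window degenerates to the single point $\lambda=\tfrac12$, so essentially the whole range of $\lambda$ above $\alpha_1/(\alpha_1+\alpha_2)$ (and, by symmetry, below it) is left open. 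The claim of ``ample slack'' in the $V$-form as $\lambda\to 1$ is only asymptotic and unquantified, and the proposed repair --- a rational lower barrier for $N$ (or $M$) matching the first-order behaviour at tilt $0$ --- is never exhibited, nor is it shown that any such barrier exists or suffices in the middle regime where, as you say yourself, $f'(x^{*})$ is only barely negative and low-order estimates of the tilted-Beta mean fail. Consequently the strictness assertions for the upper inequality are also unestablished. This is a genuine gap: what is missing is precisely the sharp estimate that makes the lemma nontrivial, not a routine detail. (Note also that the paper itself does not prove this lemma; it defers to Lemma B.3 of the cited reference, so there is no in-text proof to compare your route against --- but as a standalone argument yours is incomplete.)
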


\subsection{Proof of Theorem~\ref{majorization_thm}}
\label{proof_majorization_thm} 
We prove the theorem for the case where $\alpha = \beta$ and $s=1$. The general case follows from the scaling properties of gamma distribution. 

We first consider the case where $n \geq 3$. If $\bm \lambda \succ \bm \mu$, then there exists a finite number, $r$, of vectors $\bm{\eta}_{i}, i = 1,2,\ldots r$, such that $\bm \lambda = \bm{\eta}_{1} \succ \bm{\eta}_{2} \succ \ldots \succ \bm{\eta} _{r-1} \succ \bm{\eta}_{r} = \bm{\mu}$, and $\bm{\eta}_{i}$ and $\bm{\eta}_{i+1}$ differ in two coordinates only, $i = 1,2,\ldots,r-1$, see~\cite[12.5.a]{peprto}. Thus we may, without loss of generality assume that $\bm \lambda$, and $\bm \mu$ differ only in two coordinates, and in fact, assume that for some $1\leq j < k \leq n$, we have
\begin{eqnarray}
(\lambda_{j},\lambda_{k}) \succ (\mu_{j},\mu_{k}) \; \text{ and } \; 
\lambda_{i} = \mu_{i} \text{ for } i \in \{1,\ldots,n\} \backslash \{j,k\}.
\label{shcur_vec_2_componts}
\end{eqnarray}
For $t \in [0,1]$, define
\begin{eqnarray}
\nu_{i}(t) &\defeq& t \lambda_{i} + (1-t) \mu_{i}, \quad i = j,k, \nonumber \\
\nu_{i}(t) &\defeq& \lambda_{i} , \quad i \neq j,k, \nonumber \\
Y(t) &\defeq& \sum_{i = 1}^{n} \nu_{i}(t) X_{i}.
\label{schur_vect}
\end{eqnarray}
It suffices to show that the CDF of $Y(t)$, in $t \in [0,1]$, is non-increasing for $x > (2  \alpha + 1)/(2 \alpha)$ and non-decreasing for $x < (\alpha - 1)/\alpha$ . Now we take the Laplace transform of $F_{Y(t)}$ as 
\begin{eqnarray*}
J(t,z) \defeq \mathcal{L}[F_{Y(t)}](z) &=& \int_{0}^{\infty} e^{-zx} F_{Y(t)}(x) dx \\
&=& \frac{-1}{z} \int_{0}^{\infty} F_{Y(t)}(x) d\left( e^{-zx} \right) \\
&=& \frac{1}{z} \int_{0}^{\infty} e^{-zx} dF_{Y(t)}(x) \\
&=& \frac{1}{z} \mathcal{L}[Y(t)](z),
\end{eqnarray*}
where $\mathcal{L}[Y(t)](z)$ is the Laplace transform of $Y(t)$ as
\begin{eqnarray*}
\mathcal{L}[Y(t)](z) = \prod \limits_{i =1}^{n} \left(1 + \frac {\nu_{i}(t) z}{\alpha}\right)^{-\alpha}, \text{ for } z \in \mathbb{C}, \;
Re(z) > -\min_{1\leq i \leq n} \frac{\alpha}{\nu_{i}(t)}.
\end{eqnarray*}
Differentiating with respect to $t$ yields
\begin{eqnarray*}
\frac{\partial J}{\partial t} (t,z) &=& J (t,z) \frac{\partial}{\partial t} (\ln(J)) \\
&=& J (t,z)\sum_{i=j,k} \frac{(\mu_{i} - \lambda_{i})z}{1 + \frac {\nu_{i}(t) z}{\alpha}}. 
\end{eqnarray*}
We take the inverse transform to get
\begin{eqnarray*}
\frac{\partial }{\partial t} F_{Y(t)}(x) &=& \sum_{i=j,k} (\mu_{i} - \lambda_{i}) \frac{\partial}{\partial x} \Pr( Y(t) + \nu_{i}(t) \psi_{i} \leq x) \\
&=& \sum_{i=j,k} (\mu_{i} - \lambda_{i})  f_{ Y(t) + \nu_{i}(t) \psi_{i}} (x) \\
&=& (\mu_{j} - \lambda_{j}) [f_{ Y(t) + \nu_{j}(t) \psi_{j}}(x) - f_{Y(t) + \nu_{k}(t) \psi_{k}}(x)],
\end{eqnarray*}
where $\psi_{j},\psi_{k} \sim Gamma(1,\alpha)$ are i.i.d gamma r.v's which are also independent of all $X_{i}$'s. By~\eqref{shcur_vec_2_componts}, we must have that $\lambda_{j} \geq \mu_{j}$, so it suffices to show that $ f_{ Y(t) + \nu_{j}(t) \psi_{j}}(x) \geq  f_{Y(t) + \nu_{k}(t) \psi_{k}}(x)$ for $x > (2  \alpha + 1)/(2 \alpha )$ and $ f_{ Y(t) + \nu_{j}(t) \psi_{j}}(x) \leq  f_{Y(t) + \nu_{k}(t) \psi_{k}}(x)$ for $x < (\alpha - 1)/\alpha$. On the other hand, using the Laplace transform and inverting it again, one can show the following identity (for any integer $k \geq 1$ and reals $a,b \geq 0$)
\begin{eqnarray}
\frac{d^{k-1}}{d x^{k-1}} f_{ X + a \psi_{1}} (x) -  \frac{d^{k-1}}{d x^{k-1}} f_{X + b \psi_{2}}(x) = \frac{1}{\alpha} (b-a) \frac{d^{k}}{d x^{k}} f_{X + a \psi_{1} + b \psi_{2}}(x),
\label{diff_f_gen}
\end{eqnarray}
where $X$ is  an arbitrary continuous positive r.v and $\psi_{i} \sim Gamma(1,\alpha) \;,\; i=1,2$, are i.i.d gamma r.v's which are 
also independent of $X$. As such we have
\begin{equation*}
f_{ Y(t) + \nu_{j}(t) \psi_{j}}(x) -  f_{Y(t) + \nu_{k}(t) \psi_{k}}(x) = \frac{1}{\alpha} (\nu_{k}(t) - \nu_{j}(t)) \frac{\partial}{\partial x} f_{Y(t) + \nu_{j}(t) \psi_{j} + \nu_{k}(t) \psi_{k}}(x).
\end{equation*}
For given $1 \leq j < k \leq n$, consider the r.v $Y(t) + \nu_{j}(t) \psi_{j} + \nu_{k}(t) \psi_{k}$. By~\eqref{schur_vect}, we get that $\nu_{j}(t) \geq \nu_{k}(t)$ for $t \in [0,1]$, hence, it is only left to show that, for any $t \in [0,1]$, the mode of this r.v, under the conditions 
\begin{eqnarray*}
&& \nu_{1}(t) \geq \ldots  \geq \nu_{n}(t) \geq 0, \\
&& \sum_{i=1}^{n} \nu_{i}(t) = 1,
\end{eqnarray*}
falls between $(\alpha-1)/\alpha$ and $(2 \alpha+1)/(2 \alpha)$. By Lemma~\ref{lemma_1*}, the mode of r.v $Y(t) + \nu_{1}(t) \psi_{1} + \nu_{2}(t) \psi_{2}$ is greater than that of $Y(t) + \nu_{j}(t) \psi_{j} + \nu_{k}(t) \psi_{k}$ for $1 < j < k$, and also the mode of $Y(t) + \nu_{n-1}(t) \psi_{n-1} + \nu_{n}(t) \psi_{n}$ is smaller than that of $Y(t) + \nu_{j}(t) \psi_{j} + \nu_{k}(t) \psi_{k}$ for $j < k < n$. Hence, we only need to show that for the mode of the r.v 
\begin{subequations}
\begin{equation}
Y_{1}(t) \defeq Y(t) + \nu_{1}(t) \psi_{1} + \nu_{2}(t) \psi_{2}
\label{Y_1}
\end{equation}
denoted by $\bar{x}_{1}(t)$, we have $\bar{x}_{1}(t) \leq (2 \alpha+1)/(2 \alpha)$ and for the mode of the r.v 
\begin{equation}
Y_{n}(t) \defeq Y(t) + \nu_{n-1}(t) \psi_{n-1} + \nu_{n}(t) \psi_{n}
\label{Y_n}
\end{equation}
\label{Y_1_Y_n}
\end{subequations}
denoted by $\bar{x}_{n}(t)$, we have $\bar{x}_{n}(t) \geq (\alpha-1)/\alpha$. 
In what follows, we fix any $t \in [0,1]$ and, for notational simplicity, denote $Y_{1}(t), \bar{x}_{1}(t), Y_{n}(t)$ and $\bar{x}_{n}(t)$ by $Y_{1}, \bar{x}_{1}, Y_{n}$ and $\bar{x}_{n}$, respectively. 

We first prove the case for $Y_{1}$. At any mode of $Y_{1}$, we have\footnote{Since mode is unique, thus f must be strictly concave at the mode} 
\begin{eqnarray*}
\left[ \frac{\partial}{\partial x} f_{Y_{1}}(x, \bm \nu) \right]_{({\bar{x}_{1}(\bm \nu), \bm \nu})} = 0, \\
\left[ \frac{\partial^{2}}{\partial x^{2}} f_{Y_{1}}(x, \bm \nu) \right]_{({\bar{x}_{1}(\bm \nu), \bm \nu})} < 0,
\end{eqnarray*}
hence implicit function theorem yields
\begin{equation}
\frac{\partial \bar{x}_{1}}{\partial \nu_{1}} (\bm \nu) = - \frac{\frac{\partial^{2} f_{Y_{1}}}{\partial \nu_{1}\partial \bar{x}_{1}} (\bar{x}_{1}(\bm \nu), \bm \nu)}{\frac{\partial^{2} f_{Y_{1}}}{\partial {\bar{x}_{1}}^{2}} (\bar{x}_{1}(\bm \nu), \bm \nu) }.
\label{d_bar_x_d_nu}
\end{equation}
Let $\bm \nu^{*}$ be where the global maximum of $\bar{x}_{1}(\bm \nu)$, denoted by $\bar{x}^{*}_{1}$, occurs. Thus by the necessary condition of maximality, we must have
\begin{equation*}
\left[ \frac{\partial^{2}}{\partial \nu_{1} \partial x} f_{Y_{1}}(x, \bm \nu) \right]_{(\bar{x}^{*}_{1},\bm \nu^{*})} = 0.
\end{equation*}

Suppose where the maximum occurs, i.e., $\bm \nu^{*}$, there is a nonzero coefficient, $\nu_{3}$, such that $0 < \nu_{3} < \nu_{2} \leq \nu_{1}$. The case of $\nu_{3} = 0$ will be dealt with at the end of the proof. Fixing all other coefficients, we vary $\nu_{2}$ and $\nu_{3}$ under the condition $\nu_{2} + \nu_{3} = const$. That is, we take the directional derivative in the direction $\delta\bm{\nu} = (0, 1, -1, 0 \ldots, 0)$ for the ``$n$'' dimensional vector $\bm{\nu} = (\nu_{1},\nu_{2},\nu_{3},\ldots,\nu_{n})$. In other words, we consider the change in the direction of $\bm{\nu} + \gamma \delta\bm{\nu}$. Using Laplace transform, we get
\begin{eqnarray*}
&&\frac{d}{d\gamma}\mathcal{L}[Y_{1}](z) = \mathcal{L}[Y_{1}](z) \frac{d}{d\gamma} \ln \left[ \mathcal{L}[Y_{1}](z) \right] \\
&=&  \mathcal{L}[Y_{1}](z) \frac{d}{d\gamma} \Big( - (1+\alpha) \ln (1 + \frac {\nu_{2} z}{\alpha}) -\alpha \sum_{\substack{i=0 \\ i\neq 2}}^{n} \ln (1 + \frac {\nu_{i} z}{\alpha}) - \ln (1 + \frac {\nu_{1} z}{\alpha}) \Big) \\
&=&  \mathcal{L}[Y_{1}](z) \left( - \frac{(1+\alpha) z}{\alpha} \frac{1}{(1 + \frac {\nu_{2} z}{\alpha})} +   \frac{z}{(1 + \frac {\nu_{3} z}{\alpha})} \right) \\
&=&  z \mathcal{L}[Y_{1}](z) \left( \frac{1}{(1 + \frac {\nu_{3} z}{\alpha})} -  \frac{1}{(1 + \frac {\nu_{2} z}{\alpha})} - \frac{1}{\alpha}\frac{1}{(1 + \frac {\nu_{2} z}{\alpha})} \right) \\
&=&  z \mathcal{L}[Y_{1}](z) \left( \frac{\frac{(\nu_{2} - \nu_{3}) z}{\alpha}}{(1 + \frac {\nu_{3} z}{\alpha})(1 + \frac {\nu_{2} z}{\alpha})} - \frac{1}{\alpha}\frac{1}{(1 + \frac {\nu_{2} z}{\alpha})} \right).
\end{eqnarray*}
Now inverting the above and differentiating (w.r.t\ $x$), we get
\begin{eqnarray}
\frac{\partial^{2}}{\partial \gamma \partial x} f_{Y_{1}} (x, \bm \nu) &=& \frac{(\nu_{2} - \nu_{3}) }{\alpha} \frac{\partial^{3}}{\partial x^{3}} f_{\tilde{Y}_{23}} (x, \bm \nu) - \frac{1}{\alpha} \frac{\partial^{2}}{\partial x^{2}} f_{Y_{1} + \nu_{2} \xi_{2}} (x, \bm \nu),
\label{partial_13_1}
\end{eqnarray}
where $\tilde{Y}_{23} = Y_{1} + \nu_{2} \xi_{2} + \nu_{3} \xi_{3}$ with $\xi_{2}, \xi_{3} \sim Gamma(1,\alpha)$ being  i.i.d r.v's, independent of all others appearing before. So at the maximum, $\bar{x}^{*}_{1}$, we must have
\begin{equation}
\left[ \frac{\partial^{2}}{\partial \gamma \partial x} f_{Y_{1}} (x, \bm \nu) \right]_{(\bar{x}^{*}_{1}, \bm \nu^{*})} = 0.
\label{partial_13_1_maximum}
\end{equation}
Now consider a slight perturbation of $\delta\bm{\nu} = (0, 1, -1, 0 \ldots, 0)$ as $\delta\bm{\nu}(\veps_{0}) = (0, 1, -(1 + \veps_{0}), 0 \ldots, 0)$ for some $\veps_{0} > 0$. Computations similar as above yields
\begin{eqnarray*}
\frac{\partial^{2}}{\partial \gamma \partial x} f_{Y_{1}} (x, \bm \nu) = \frac{(\nu_{2} - \nu_{3}) }{\alpha} \frac{\partial^{3}}{\partial x^{3}} f_{\tilde{Y}_{23}} (x, \bm \nu) &-& \frac{1}{\alpha} \frac{\partial^{2}}{\partial x^{2}} f_{Y_{1} + \nu_{2} \xi_{2}} (x, \bm \nu) \nonumber \\
&+& \veps_{0} \frac{\partial^{2}}{\partial x^{2}} f_{Y_{1} + \nu_{3} \xi_{3}} (x, \bm \nu).
\end{eqnarray*}
So at the maximum, $\bar{x}^{*}_{1}$, using~\eqref{partial_13_1_maximum}, we must have
\begin{eqnarray*}
\left[ \frac{\partial^{2}}{\partial \gamma \partial x} f_{Y_{1}} (x, \bm \nu) \right]_{(\bar{x}^{*}_{1}, \bm \nu^{*})} &=& \veps_{0} \left[ \frac{\partial^{2}}{\partial x^{2}} f_{Y_{1} + \nu_{3} \xi_{3}} (x, \bm \nu) \right]_{(\bar{x}^{*}_{1}, \bm \nu^{*})} \\
&=& - \frac{\alpha \veps_{0}}{\nu_{3}} \left[ \frac{\partial}{\partial x} f_{Y_{1} + \nu_{3} \xi_{3}} (x, \bm \nu) \right]_{(\bar{x}^{*}_{1}, \bm \nu^{*})} < 0.
\end{eqnarray*}
This followed from~\eqref{diff_f_gen} (with $k=2$, $a = 0$ and $b = \nu_{3}$) and Lemma~\ref{lemma_1*}, as well as noting that $\left[ \frac{\partial}{\partial x} f_{Y_{1}} (x, \bm \nu)\right]_{(\bar{x}^{*}_{1}, \bm \nu^{*} )}= 0$. Using~\eqref{d_bar_x_d_nu}, the previous relation implies that the mode must increase if a sufficiently small step is taken along the perturbed direction $\delta\bm{\nu}(\veps_{0}) = (0, -1, (1 + \veps_{0}), 0 \ldots, 0)$. 
In other words, for $\veps_{1} >0$ small enough, for the mode of r.v 
\begin{equation}
Y^{(\veps_{0},\veps_{1})}_{1} \defeq \nu_{1}  X_{1} + (\nu_{2} -\veps_{1}) X_{2} + \big(\nu_{3} + \veps_{1} (1 + \veps_{0}) \big) X_{3} + \sum_{i = 4}^{n} \nu_{i} X_{i} + \nu_{1} \psi_{1} + (\nu_{2} - \veps_{1}) \psi_{2},
\label{y_veps_1}
\end{equation}
denoted by $\bar{x}^{(\veps_{0},\veps_{1})}_{1}$, we must have that 
\begin{equation}
\bar{x}^{*}_{1} < \bar{x}^{(\veps_{0},\veps_{1})}_{1}.
\label{ineq_mode_veps_1}
\end{equation}
It is clear that $\veps_{1}$ depends on $\veps_{0}$ and consequently, as $\veps_{0}$ gets smaller, $\veps_{1}$ might get smaller as well. However, $\veps_{1} \veps_{0} \in o(\veps_{0})$ and $\veps_{1} \veps_{0} \in o(\veps_{1})$, where ``$o$'' denotes the ``\textit{little o}''. In other words, consider decreasing sequences of $(\veps_{0}^{n})_{n=1}^{\infty}$ and $(\veps^{n}_{1})_{n=1}^{\infty}$. Since $\lim_{n \rightarrow \infty} (\veps_{1}^{n} \veps_{0}^{n})/ \veps^{n}_{0} = \lim_{n \rightarrow \infty} (\veps_{1}^{n} \veps^{n}_{0})/ \veps_{1}^{n} = 0$, it follows that, as $\veps_{0}$ gets smaller, $\veps_{1} \veps_{0}$ term in~\eqref{y_veps_1} vanishes at a faster rate than either of $\veps_{0}$ or $\veps_{1}$. Hence 
using~\eqref{y_veps_1} and~\eqref{ineq_mode_veps_1}, as well as the continuity of $\bar{x}_{1}$, we can consider a r.v 
\begin{equation*}
Y^{\veps}_{1} \defeq \nu_{1} X_{1} + (\nu_{2}  - \veps) X_{2} + (\nu_{3} + \veps) X_{3} + \sum_{i = 4}^{n} \nu_{i} X_{i} + \nu_{1}  \psi_{1} + (\nu_{2} -\veps) \psi_{2},
\end{equation*}
which has the same form as $Y_{1}$ but whose mode, denoted by $\bar{x}^{\veps}_{1}$, satisfies
\begin{equation*}
\bar{x}^{*}_{1} < \bar{x}^{\veps}_{1},
\end{equation*}
which contradicts the maximality of $\bar{x}^{*}_{1}$. This means that, if $\nu_{3} \neq 0$, then the maximum must occurs at $\bm \nu^{*}$ with $\nu_{3} = \nu_{2} \leq \nu_{1}$. Similar arguments show that at the maximum, we must have $\nu_{n} = \ldots = \nu_{3} = \nu_{2} \leq \nu_{1}$. But by~\eqref{partial_13_1} and the fact that
\begin{equation*}
\left[ \frac{\partial^{2}}{\partial x^{2}} f_{Y_{1} + \nu_{2} \xi_{2}} (x, \bm \nu) \right]_{(\bar{x}^{*}_{1}, \bm \nu^{*})} = - \frac{\alpha}{\nu_{2}} \left[ \frac{\partial}{\partial x} f_{Y_{1} + \nu_{2} \xi_{2}} (x, \bm \nu) \right]_{(\bar{x}^{*}_{1}, \bm \nu^{*})} < 0,
\end{equation*}
we must have that $\nu_{3} \neq \nu_{2}$. As such, the only possibility is $\nu_{n} = \ldots = \nu_{3} = 0$, i.e., the maximum mod of $Y_{1}$ coincides with that of r.v $\zeta = \nu \zeta_{1} + (1-\nu) \zeta_{2}$ where $\zeta_{1}, \zeta_{2} \sim Gamma(1+\alpha,\alpha)$. So now lemma~\ref{lemma_2*} gives 
\begin{equation*}
\bar{x}^{*}_{1} \leq \frac{2 \alpha + 1}{2 \alpha }.
\end{equation*}

Now consider the case for $Y_{n}$. Suppose where the minimum occurs, i.e., $\bm \nu^{*}$, we have, $\nu_{n-1} > 0$ and $\nu_{n-2} > \nu_{n-1}$. Now by the same reasoning as in the case of $Y_{1}$, we can consider a r.v 
\begin{equation*}
Y^{\veps}_{n} \defeq \sum_{i = 1}^{n-3} \nu_{i} X_{i} \nu_{1} X_{1} + (\nu_{n-2}  - \veps) X_{n-2} + (\nu_{n-1} + \veps) X_{n-1} + \nu_{n} X_{n} + (\nu_{n-1} + \veps)  \psi_{n-1} + \nu_{n} \psi_{n},
\end{equation*}
which has the same form as $Y_{n}$ but whose mode, denoted by $\bar{x}^{\veps}_{n}$, satisfies
\begin{equation*}
\bar{x}^{\veps}_{n} < \bar{x}^{*}_{n},
\end{equation*}
which contradicts the minimality of $\bar{x}^{*}_{n}$. This means that, if $\nu_{n-1} \neq 0$, then the minimum must occurs at $\bm \nu^{*}$ with $\nu_{n-2} = \nu_{n-1}$. But again, as in the end of the proof for the case of $Y_{1}$, we must have that $\nu_{n-2} \neq \nu_{n-1}$. As such the only possibility is $\nu_{n-1} = 0$ (which would also mean that $\nu_{n} = 0$, although this case can also be independently established with the same reasoning as above). Hence, the minimum mode of $Y_{n}$ must coincide with that of the r.v $\sum_{i = 1}^{n-2} \nu_{i} X_{i}$. Again, appealing to the same line of reasoning, at the minimum mode of $\sum_{i = 1}^{n-2} \nu_{i} X_{i}$, we must either have $\nu_{j} = 0$ or $\nu_{j-1} = \nu_{j}$ for any $1 < j \leq n-2$. Hence, for $n \geq 3$, we get
\begin{equation*}
\bar{x}^{*}_{n} = \min_{1 \leq j \leq n-2} \bar{x}_{j \alpha} = \frac{\alpha - 1}{\alpha},  
\end{equation*}
where $\bar{x}_{j \alpha}$ denotes the mode of the r.v $Gamma\big(j\alpha, j \alpha \big)$.

The case of $n=2$, amounts to studying the mode of r.v $\zeta = \nu \zeta_{1} + (1-\nu) \zeta_{2}$, denoted by $\bar{x}(\zeta)$, where $\zeta_{1}, \zeta_{2} \sim Gamma(1+\alpha,\alpha)$. Direct application of Lemma~\ref{lemma_2*} yields $1 \leq \bar{x}(\zeta) \leq (2 \alpha + 1)/(2 \alpha)$. Theorem~\ref{majorization_thm} is proved.
$\blacksquare$

\subsection{Proof of Corollary~\ref{majorization_cor_weak}}
\label{proof_majorization_cor_weak} 
The proof goes along the same line as that of~Theorem~\ref{majorization_thm}. For $t \in [0,1]$, we again define 
\begin{eqnarray*}
\nu_{i}(t) &\defeq& t \lambda_{i} + (1-t) \mu_{i}, \quad i = j,k,\\
\nu_{i}(t) &\defeq& \lambda_{i} , \quad i \neq j,k,\\
Y(t) &\defeq& \sum_{i = 1}^{n} \nu_{i}(t) X_{i}.
\end{eqnarray*}
where 
\begin{eqnarray*}
&& \nu_{1}(t) \geq \ldots  \geq \nu_{n}(t) \geq 0, \quad t \in [0,1], \\
&& \sum_{i=1}^{n} \nu_{i}(t) = s_{\bm{\mu}} + t (s_{\bm{\lambda}} - s_{\bm{\mu}}).
\end{eqnarray*}
Computations identical to the proof of Theorem~\ref{majorization_thm} give
\begin{eqnarray*}
\bar{x}^{*}_{1}(t) &\leq& \big( s_{\bm{\mu}} + t (s_{\bm{\lambda}} - s_{\bm{\mu}}) \big) \frac{2 \alpha + 1}{2 \alpha }, \\
\bar{x}^{*}_{n}(t) &\geq& \big( s_{\bm{\mu}} + t (s_{\bm{\lambda}} - s_{\bm{\mu}}) \big) \frac{\alpha - 1}{\alpha },
\end{eqnarray*}
where $\bar{x}^{*}_{1}(t)$ and $\bar{x}^{*}_{n}(t)$ are, respectively, the maximum and minimum mode of r.v's $Y_{1}(t)$ and $Y_{n}(t)$ which are defined similarly as in~\eqref{Y_1_Y_n}.
Now for $t \in [0,1]$, we get
\begin{eqnarray*}
\bar{x}^{*}_{1}  &=& \max_{t \in [0,1]} \bar{x}^{*}_{1}(t) \leq \frac{( 2 \alpha + 1 ) s_{\bm{\lambda}} }{2 \alpha }, \\
\bar{x}^{*}_{n}  &=& \min_{t \in [0,1]} \bar{x}^{*}_{n}(t) \geq \frac{( \alpha - 1 ) s_{\bm{\mu}} }{\alpha },
\end{eqnarray*}
which give the desired results.
$\blacksquare$

\subsection{Proof of Corollary~\ref{majorization_cor_weak_infty}}
\label{proof_majorization_cor_weak_infty}
Define
\begin{equation*}
P_{n}(\bm{\lambda}_{n};\alpha,\beta,x) \defeq \Pr \left( \sum_{i=1}^{n} \lambda_{i} X_{i} < x \right),
\end{equation*}
where
\begin{equation*}
\bm{\lambda}_{n} \defeq (\lambda_{1},\lambda_{2},\ldots,\lambda_{n}).
\end{equation*}
By the continuity from above, it is clear that
\begin{equation*}
P_{\infty}(\bm{\lambda};\alpha,\beta,x) = \lim_{n \rightarrow \infty} P_{n}(\bm{\lambda}_{n};\alpha,\beta,x).
\end{equation*}
Now since $\bm{\mu}_{n} \prec_{w} \bm{\lambda}_{n}$, Corollary~\ref{majorization_cor_weak} yields
\begin{eqnarray*}
P_{n}(\bm{\mu};\alpha,\beta,x) &\geq& P_{n}(\bm{\lambda};\alpha,\beta,x), \quad \forall x > \frac {(2  \alpha + 1) \sum_{i=1}^{n} \lambda_{i} }{2 \beta }, \\
P_{n}(\bm{\mu};\alpha,\beta,x) &\leq& P_{n}(\bm{\lambda};\alpha,\beta,x), \quad \forall x <
  	\frac{(\alpha-1) \sum_{i=1}^{n} \mu_{i}}{\beta} , \quad \alpha > 1.
\end{eqnarray*}
Taking the limit as $n \rightarrow \infty$ gives the desired result.
$\blacksquare$

%


\end{document}